
\documentclass[epsfig,leqno,12pt]{amsart}

 \usepackage{eepic,epic}            
\usepackage{graphicx}
\renewcommand{\theequation}     
    {\arabic{section}.\arabic{subsection}.\arabic{equation}}

\theoremstyle{plain}
\newtheorem*{theorem}{Theorem}

\newtheorem*{corollary}{Corollary}

\theoremstyle{remark}

\newtheorem*{remark}{Remark}

\newtheorem*{definition}{Definition}

\def\U{{\frak U}}

\def\AA{{{\mathrm{A}}_{\frac{1}{2}\infty}}}
\def\DD{{{\mathrm{D}}_{\frac{1}{2}\infty}}}
\def\Z{{\mathbf Z}}

\def\C{{\mathbf C}}
\def\R{{\mathbf R}}
\def\P{{\mathbf P}}

\def\Id{{\operatorname{Id}}}

\def\Hom{{\operatorname{Hom}}}

\def\ker{{\operatorname{ker}}}

\def\id{{\operatorname{id}}}
\def\rank{{\operatorname{rank}}}


\overfullrule0pt

\begin{document}
\title[Coxeter elements for  vanishing\\ cycles of types $\AA$ and $\DD$]
{Coxeter elements for vanishing \\ cycles of types $\AA$ and $\DD$}%
\author{Kyoji Saito}
\address{ IPMU, university of Tokyo}

 \begin{abstract}
We introduce two real entire functions $f_{\AA}$and $f_{\DD}$ in two
  variables. Both of them have only two critical values $0$ and $1$, and
  the associated maps $\C^2\!\to\! \C$ define topologically locally trivial fibrations over
  $\C\!\setminus\!\{0,1\}$. All critical points are
  ordinary double points, and the associated vanishing cycles span
  the middle homology group of the general fiber, whose intersection
  diagram forms bi-partitely decomposed quivers of type $\AA$ and $\DD$, respectively.
Coxeter elements of type $\AA$ and $\DD$, acting on the
  middle homology group, are introduced as the product of
  the monodromies around $0$ and $1$.
We describe the spectra of the Coxeter
  elements by embedding the middle homology group into a Hilbert
  space. The spectra turn out to be strongly continuous on the
  interval $(-\frac{1}{2},\frac{1}{2})$ except at 0 for type $\DD$.
\end{abstract} \footnote
{Present paper is planned as the first part of a paper ``Primitive forms
of types $\AA$ and $\DD$'' in preparation. We publish the present part
(the spectra of Coxeter elements) separately, because of its own
independent interest.
}

\vspace{-0.4cm}

\maketitle   
\vspace{-0.5cm}
\tableofcontents
\vspace{-0.7cm}

\newpage
\section{\vspace{-0.1cm}
Introduction}

\vspace{-0.1cm}
We introduce two particular entire transcendental functions in two variables,
which we will call the functions of types $\AA$ and $\DD$,
respectively. 
They are introduced in the hope that some period maps
associated with them should contribute
to the understanding of KP- and KdV-hierarchies. For this purpose, we need to
develop a theory of primitive forms for these transcendental functions
by analogy with the classical theory of primitive forms for polynomial
local singularities {\small \cite{Sa1}},\footnote
{In the classical
theory, a primitive form is defined on the universal
unfolding of a function having an isolated critical point. Whereas, in
the present program, the ``generating center of the unfolding''
is these functions of types {\tiny $\AA$ and $\DD$,} where there is
not yet a general frame work available.} 
where the data of spectrum of Coxeter elements is basic. Therefore, as a first
step, in the
present paper, we study  the spectrum of the 
large circle monodromy, called the Coxeter element, acting on the lattice of vanishing\! cycles of these
functions.\ The goal is to show that the spectrum is contained in the
interval 
$(\!$-$\!\frac{1}{2},\!\frac{1}{2}\!)$. 

We are still in early stage in studing transcendental
functions in such geometric contexts.
Many of classical algebraic tools are not available due to the lack of
compactness/finiteness nature of them. However, the
transcendency of the functions which we study in the present paper as
the test cases, is
still not ``wild'', and we handle them by ``hand''. Even\! though each\!
step\! of\! the calculation is elementary,\! we\ want to be cautious and will
proceed with the calculations in down to earth fashion.

Let us explain the contents of the paper in more details.  We first
explain a classical analogue of the present work {\small (\cite[\S2.5,3]{Sa2})}, and then 
make some comparisons between the classical case and the present case. 

For a Dynkin graph $\Gamma_W$ of type {\small $W\!\in\!\!\{A_l \ (l\!\in\!\!\Z_{\ge\!1}\!), D_l\
(l\!\in\!\!\Z_{\ge\!4}\!), E_6, E_7, E_8\}$}, there exists a real polynomial $f_W(x,y,z)$ with the following
i)-iii).\footnote
{This is a consequence of a result in \cite[\S2.5,\! 3]{Sa2}(\cite[\S6.5
Remark 19 and \S8.9 Remark 27]{Sa3}). Let us briefly recall the
result. For each simply-laced Dynkin type $W$,
there exists two parameter family $f_W(x,y,z,R,S)$ of real polynomials
of type $W$, having only two critical values and having properties ii)
and iii) (choose $f_W$ such that the its deformation 
class $[f_W]$ belongs to the vertex orbit line $O$ in the deformation
parameter space $T_W$ of simple polynomials of type $W$). Then, fix the
parameter values of $(R,S)$ to re-size the critical values
to $\{0,1\}$. In particular, if $W\!=\!A_l$, the polynomial is given by
(Chebyshev polynomial in $x$) $+y^2+z^2$.

 Instead of the formulation in 3-variables as given here, we may
 formulate results in 2-variables by replacing an intersection
diagram by a quiver diagram (see \S3.3).
%
}

i) {\it All critical points of $f_W$ are Morse (i.e.\ 
{\small Hessians at the critical points are non-degenerate}), and $f_W$ has only two critical values $0$ and $1$. }

ii) {\it The map $f_W:\C^3\to \C$ is a locally trivial
fibration over $\C\setminus\{0,1\}$. Let us denote by $X_t$ the fiber
$f_W^{-1}(t)$ over a point $t\in\C$.}

iii) {\it For $t\in(0,1)$, let
$\{\gamma_{0}^{(i)}\}_{i\in C_0}$ (resp.\ $\{\gamma_{1}^{(i)}\}_{i\in C_1}$) be
the set of cycle in the middle homology group $\mathrm{H_2}(X_t\Z)$
which vanish at a critical point in the fiber $X_0$ as $t\!\downarrow\!0$ (resp.\
$X_1$ as $t\!\uparrow\!1$). Then, a) the union $\{\gamma_{0}^{(i)}\}_{i\in
C_0}\cup\{\gamma_{1}^{(i)}\}_{i\in C_1}$ forms a basis of
$\mathrm{H_2}(X_t,\Z)$, b) the intersection diagram of the basis
gives a bi-partite decomposition of the Dynkin graph $\Gamma_W$.}

In the first half of the present paper, we show that the functions of type 
$\AA$ and $\DD$ satisfy exactly the properties parallel to i), ii) and
iii) by replacing $\Gamma_W$ by the infinite quivers of type $\AA$
and $\DD$ introduced in \S3.2. This fact explains the naming of the
functions. Here we should note that the middle homology group is of
infinite rank.

In the classical polynomial $f_W$ case, the product of the two monodromies of the
fibration around $0$ and around $1$, acting on the lattice
$\mathrm{H_2}(X_t,\Z)$, is called the {\it Coxeter element}. The eigenvalues of the Coxeter elements are given by the set
{\small $\mathrm{exp}(2\pi\sqrt{-1}\frac{m_i}{h})$ ($i\!=\!1,\!\cdots\!,l$)}, where
{\small $h\in\Z_{>0}$} is the Coxeter number of type $W$ and
{\small $0\!<\!m_1\!<\!m_2\!\le\!\cdots\!<\!m_l\!<\!h$} are called exponents (see {\small \cite[ch.V,\S6,n$^o$2]{Bo}}).
The data of exponents, or equivalently, the spectrum
$\frac{m_i}{h}$ {\small ($i\!=\!1,\!\cdots\!,l$)} are quite important both for the
Lie theory of type $W$ {\small \cite{Bo}} and for the primitive forms of type $W$
{\small \cite{Sa1}}. For instance, the fact
that the spectrum is contained in the interval
$(\frac{d}{2}\!-\!1,\frac{d}{2})$ means that the 
primitive form is of ``simple type''  of dimension $d$ (see Remark at the
end of \S3 and {\small (\cite{Sa4})}. However, the eigenvalues of the Coxeter
element alone are not sufficient to recover the spectrum due to an
ambiguity modulo integers. The clue to recover the 
spectrum is the study of eigenvalues of the intersection form on
the root lattice, as will be described in \S4 in the present paper.

Returning to the transcendental functions of types $\AA$
and $\DD$, in analogy with the classical case, we introduce the Coxeter element as the product of the local
monodromy around $0$ and around $1$. In order to be able to discuss
about the ``eigenvalues'' of the Coxeter element,
we embed the ``lattice'' into the Hilbert space (see \S4.1) such
that the simple root
basis turn to the ortho-normal basis of the Hilbert space.\footnote
{This
view point is already implicitly in {\small \cite[ch.V,\S6,n$^o$2]{Bo}}. The Hilbert
space lies between the homology group 
$\mathrm{H}_1(X_t,\C)$ and the cohomology group $\mathrm{H}^1(X_t,\C)$ (i.e.\ a sort of
``intersection cohomology group'', See end of \S4.2), which fit with our
original intention to develop a period map theory for this cohomology groups.}

The main result of the present paper is that the spectrum of the
 Coxeter element is given by the function 
 $\theta\!-\!\frac{1}{2}$ on the interval $\theta\in[0,1]$ with a Stieltjes measure 
$\xi_{W,\theta}$ which is strongly continuous
 (\S4 Theorem 7). Actually, $\xi_{W,0}\!=\!\xi_{W,1}\!=\!0$ (i.e.\
 there is no discrete spectrum at $\theta=0,1$) so that the spectrum is
 contained in the open interval $(-\frac{1}{2},\frac{1}{2})$. 

This is what was expected from the analogy with classical theory.


\newpage
\section{Functions of types $\AA$ and $\DD$ } 

\noindent
We introduce functions of type $\AA$ and $\DD$ and associated fibrations.

\subsection{Definition of  {\small $f_{\AA}$} and {\small $f_{\DD}$}}
\begin{definition}
The function $f_{W}$ of type $W\in\{\AA,\DD\}$ \footnote
{In the present paper, the expression ``{\it of type} $W$" automatically
 implies $W\in \{\AA,\DD\}$. Meaning for this name is given in \S3.2 Quiver and
 its Remark.
}
 is a {\it real entire function}\footnote{
We mean by a {\it real entire function of n-variables} a holomorphic
 function on $\C^n$ which is real valued on the real form $\R^n$ of $\C^n$.
}
in two variables $x$ and $y$ given by
\begin{eqnarray}
\label{eq:1}
f_{\AA}(x,y) &\!:=\! &\!x s^2(x) -y^2 \ \ =\ 1-c^2(x)-y^2 \\
\label{eq:2}
f_{\DD}(x,y) &\!:=\! &\!x s^2(x)-xy^2 \ =\ 1-c^2(x)-xy^2 .
\end{eqnarray}
Here $s(x)$ and $c(x)$ are real entire functions \footnote
{In the sequel of the present paper, we shall freely use the following equalities:
\centerline{
$c(0)\!=\!s(0)\!=\!1, \ \ \  xs^2(x)\!+\!c^2(x)\!=\!1, \ \ \  s'(x)\!=\!\frac{1}{2x}(c(x)\!-\!s(x))\ \ \ \text{and}\ \ \ c'(x)\!=\!-\!\frac{1}{2}s(x)$
}
without referring to them explicitly (here $f'(x)\!=$the differentiation of $f(x)$).
}
in a variable $x$ given by
{\small
\begin{eqnarray}
\label{eq:3}
 s(x)&:= & \frac{\sin{\sqrt{x}}}{\sqrt{x}} \ =\  \prod_{n=1}^{\infty} \big(1-\frac{x}{n^2\pi^2}\big)  \\
\label{eq:4}
c(x)& := &\cos{\sqrt{x}} \ =\ \prod_{n=1}^\infty \big(1-\frac{4x}{(2n-1)^2\pi^2}\big).
\end{eqnarray}
}
\end{definition}

\subsection{Real level sets {\footnotesize $X_{\AA,0,\R}$} and {\footnotesize $X_{\DD,0,\R}$}}.

We introduce the real level-0 set of the function $f_W$ of type $W$ by   
\[
X_{W,0,\R}\ :=\ \R^2\cap f_W^{-1}(0)\ .
\]
Conceptual figures of them are drawn in the following.

\vspace{1.cm}

\noindent
$
\begin{array}{ll}
\text{Figure 1}\\
{}\\
X_{A_{\frac12\infty},0,\mathbb{R}}
\end{array}
$
\quad
\smash{\raisebox{-1cm}{%
\includegraphics[width=10cm,height=2.16cm]{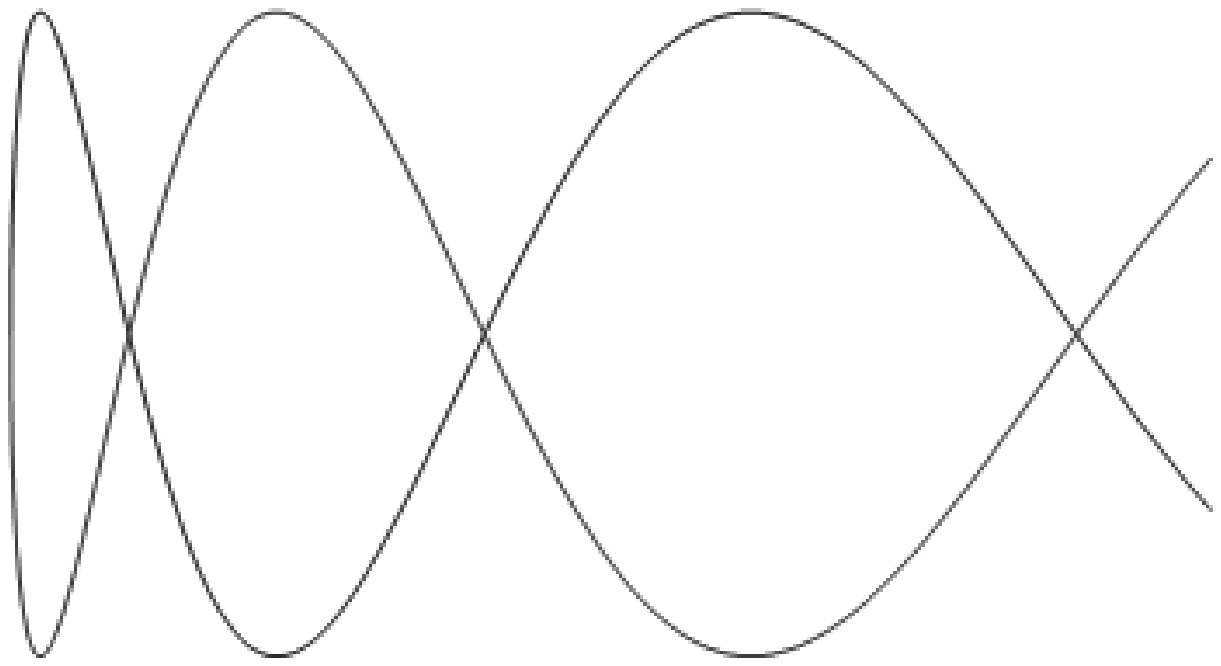}}}

\vspace{1.cm}

\noindent
$
\begin{array}{ll}
\text{Figure 2}\\
\\
X_{D_{\frac12\infty},0,\mathbb{R}}
\end{array}
$
\quad
\smash{\raisebox{-1cm}{%
\includegraphics[width=10cm,height=2.16cm]{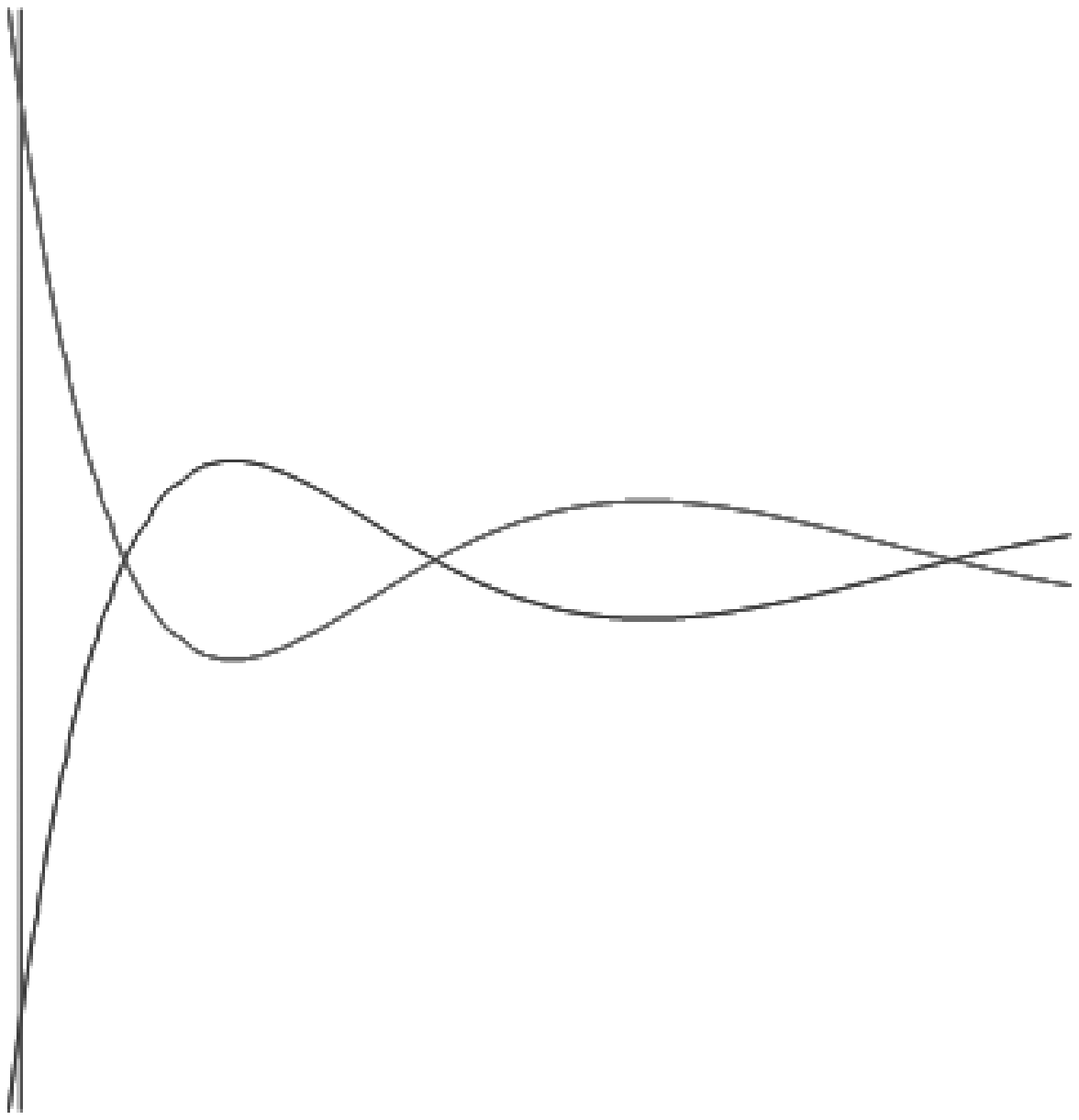}}}
\vspace{-2.0cm}

\newpage
\noindent
{\bf Terminology} 1. By a {\it bounded connected component} ({\it bcc} for short) of type $W$, we mean a bounded connected component of $\R^2 \setminus X_{W,0,\R}$. 

2. By a {\it node} of type $W$, we  mean a point on the real curve $X_{W,0,\R}$ where two local smooth irreducible components are crossing normally.

3. We say that a node of type $W$ is {\it adjacent} to a bcc of type $W$ if the node belongs to the closure of the bcc.

\medskip
We state some immediate observations on the level set $X_{W,0,\R}$, which can be
easily verified by a use of absolutely convergent infinite products
\eqref{eq:3} and \eqref{eq:4}. 

\medskip
\noindent
{\bf Observation 1.}\ {\it For $n\!=\!0,1,2,\cdots$, there exists exactly one bounded connected component of type $W$, containing the interval $(n^2\pi^2,(n\!+\!1)^2\pi^2)$ on the $x$-axis and contained in the domain $(n^2\pi^2,(n\!+\!1)^2\pi^2)\times y$-axis.}

{\bf 2.} {\it For $n=1,2,3,\cdots$, the point
$c_{W,0}^{(n)}:=(n^2\pi^2,0)$ on the $x$-axis is a node of type $W$,
which is adjacent to two bcc containing the interval $((n\!-\!1)^2\pi^2,n^2\pi^2)$ and the interval $(n^2\pi^2,(n\!+\!1)^2\pi^2)$. }

\subsection{Fibrations over {\footnotesize $\C\setminus\{0,1\}$}}.

For each type $W\in \{\AA,\DD\}$, let us consider a holomorphic map 
\begin{equation}
\label{eq:5}
f_W\ :\ \mathbf{X}_W\ \longrightarrow\ \C,
\end{equation}
where the domain $\mathbf{X}_W\!:=\!\C^2$ of $f_W$ is regarded as a contractible Stein manifold equipped with the real form $\R^2$.
The fiber $X_{W,t}:=f_W^{-1}(t)$ over $t\in \C$ is an {\it open} Riemann surface, closely embedded in $\C^2$. 

\smallskip 
\begin{remark} As we shall see in sequel, the fiber $X_{W,t}$ ($t\in\C$)
 has infinite genus. It is ``wild'' in the sense that the closure $\bar X_{W,t}$ in $\P^2_\C$ is equal to $X_{W,t}\cup \P^1_\C$ (i.e.\ the ``ends'' of $X_{W,t}$ is the $\P^1_\C$, this fact can be easily shown by the value distribution theory of one variable). 
By putting
\begin{equation}
\label{eq:6}
\bar{\mathbf{X}}_W\ :=\ \mathbf{X}_W \cup (\P^1_\C\times \C) \ := \cup_{t\in\C} (\bar X_{W,t},t)\ \subset \P^2_\C\times\C, 
\!\!\!\!
\end{equation}
we obtain a proper map, i.e. a ``compactification'' of \eqref{eq:5}:
\begin{equation}
\label{eq:7}
\bar f_W\ :\ \bar{\mathbf{X}}_W\ \longrightarrow\ \C.
\end{equation}
However, the spaces $\bar X_{W,t}$ and $\bar{\mathbf{X}}_W$ are not manifolds with
 boundary (note that their ``boundaries'' $\P^1_\C$ and $\P^1_\C\times
 \C$, respectively, have the same dimension as the ``interior''
 $X_{W,t}$ and $\mathbf{X}_W$). 

By a lack of tools to handle such objects at
 present, we shall not use this compactification  in the present
 paper. 
Nevertheless, in the following Theorem 3, we  show that $f_W$ induces a locally topologically trivial fibration over $\C\setminus\{0,1\}$.
The proof is an elementary handwork, however it is not standard due
to the transcendental nature of  $f_W$ mentioned. Therefore,
we write the proof down to the earth fasion.
\end{remark}

\begin{theorem} For each type $W\in\{\AA,\DD\}$, we have the followings.

\smallskip
{\bf 1.}  The function  $f_{W}$ has only two critical values 0 and 1. That is,
the set of critical points $C_{W}$ of $f_W$ is contained in two fibers $X_{W,0}$ and $X_{W,1}$.

{\bf 2.} {\bf i)} The critical set $C_W$  lies in the real form $\R^2$ of $\mathbf{X}_W$. 

{\bf ii)}  The Hessian form of $f_W|_{\R^2}$ at a critical point is non-degenerate. More precisely, the Hessian form is indefinite at a point in $C_{W,0}\!:=\!C_{W}\!\cap\! X_{W,0}$ and is negative definite at a point in $C_{W,1}\!:=\!C_{W}\!\cap\! X_{W,1}$. 

\smallskip
{\bf iii)} We have the natural bijections:
\begin{eqnarray}
\label{eq:8}
\qquad \qquad \qquad C_{W,0} &\! \simeq\! & \{\text{nodes of type \ } P\}\quad
(\text{identity map}),\\
\label{eq:9}
C_{W,1} &\! \simeq\! & \{\text{bcc's of  type \ } W\}\quad (c \mapsto  B_c :=
\substack{\text{\normalsize\!\!\! the bcc \quad }\\ \text{\normalsize containing } }\ c)
\ \ 
\end{eqnarray}

\smallskip
{\bf 3.} The restriction of the map $f_W$ to the smooth fibers:
\begin{equation}
\label{eq:10}
f_{W}|_{\mathbf{X}_W\setminus(X_{W,0}\cup X_{W,1})}: \mathbf{X}_W\setminus(X_{W,0}\cup X_{W,1})\rightarrow \C\setminus\{0,1\}
\end{equation}
is a topologically locally trivial fibration.
\end{theorem}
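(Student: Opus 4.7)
The plan is to treat Parts 1--3 in sequence: the first two are essentially computation, and the third carries the main difficulty.

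For Parts 1 and 2, I would differentiate \eqref{eq:1} and \eqref{eq:2} using the identities in the footnote to get $\partial_x f_{\AA} = s(x)c(x)$, $\partial_y f_{\AA} = -2y$ and $\partial_x f_{\DD} = s(x)c(x)-y^2$, $\partial_y f_{\DD} = -2xy$. The system $\nabla f_W = 0$ then decouples: for $\AA$, $y=0$ together with $s(x)c(x)=0$; for $\DD$, the same plus the extra branch $(x,y)=(0,\pm 1)$ coming from the factor $x$ in $\partial_y f_{\DD}$. By \eqref{eq:3} and \eqref{eq:4} the zeros of $s$ are $\{n^2\pi^2\}_{n\geq 1}$ and those of $c$ are $\{(2n-1)^2\pi^2/4\}_{n\geq 1}$. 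Evaluating $f_W$ at each critical point, together with $xs^2+c^2=1$, gives $f_W=0$ at the zeros of $s$ (and at $(0,\pm 1)$ for $\DD$) and $f_W=1$ at the zeros of $c$, proving Part 1; every such critical point is real, proving 2(i). For 2(ii), $\partial_{xy} f_W$ vanishes at $y=0$, so the Hessian at any critical point on the $x$-axis is diagonal, with opposite-sign diagonal entries at $C_{W,0}$-points (indefinite) and two negative diagonal entries at $C_{W,1}$-points (negative definite); at the remaining $\DD$-points $(0,\pm 1)$ one computes $\partial_{yy}=0$ and $\partial_{xy}=\mp 2$, giving Hessian determinant $-4$, again indefinite. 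For 2(iii) the stated bijections then follow from Observations 1 and 2 by direct coordinate matching: in both types the zeros of $s$ on the $x$-axis (together with $(0,\pm 1)$ for $\DD$) are exactly the nodes of $X_{W,0,\R}$, while each bcc contains exactly one point of the form $((2n-1)^2\pi^2/4, 0)$.

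For Part 3, I would construct a horizontal smooth vector field and flow along it. Since $f_W$ is a holomorphic submersion on $\mathbf{X}_W\setminus(X_{W,0}\cup X_{W,1})$, the open sets $U_x := \{\partial_x f_W\neq 0\}$ and $U_y := \{\partial_y f_W\neq 0\}$ cover this complement, with local horizontal lifts $V_x := (\partial_x f_W)^{-1}\partial_x$ and $V_y := (\partial_y f_W)^{-1}\partial_y$ both satisfying $V(f_W)=1$. A smooth partition of unity subordinate to $\{U_x, U_y\}$ patches these into a global smooth vector field $V$ on $\mathbf{X}_W\setminus(X_{W,0}\cup X_{W,1})$ with $V(f_W)=1$; integrating $V$ along straight-line paths in a small disk $U\subset \C\setminus\{0,1\}$ then yields the family of fiber-preserving homeomorphisms that trivializes \eqref{eq:10} over $U$.

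The main obstacle is completeness of the flow: integral curves starting on $f_W^{-1}(t_0)$ must not escape to infinity in the fiber before traversing $U$. Since $\mathbf{X}_W$ is non-compact and the fibers $X_{W,t}$ themselves are non-compact with \emph{wild} ends (cf.\ the Remark), Ehresmann's proper-submersion theorem is unavailable, and this is precisely the non-standard point of the argument. I expect to handle this by explicit asymptotic control using the product expansions \eqref{eq:3} and \eqref{eq:4}: where $|y|$ is large, $V_y$ decays like $1/|y|$, and away from tubular neighborhoods of $\{s(x)c(x)=0\}$ (respectively $\{s(x)c(x) = y^2\}$ in the $\DD$ case) $V_x$ remains bounded. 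Choosing the partition of unity so that the tamer regime dominates at each point, and keeping track of the growth of $\sin\sqrt{x}$ and $\cos\sqrt{x}$ along the various directions of the complex $x$-plane, should yield uniform bounds on $V$ over $f_W^{-1}(K)$ for each compact $K\subset \C\setminus\{0,1\}$. This is the elementary but non-standard handwork forecast in the Remark preceding the theorem.
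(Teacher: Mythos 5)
Your Parts 1 and 2 follow the paper's own computation and are correct; indeed you supply the determinant computation at $(0,\pm1)$ that the paper leaves as ``verified directly''. One small caveat in 2(iii): Observations 1 and 2 of \S2.2 only assert the \emph{existence} of the listed bcc's and nodes, not that they exhaust all of them, so ``direct coordinate matching'' gives well-definedness and injectivity of the two correspondences but not surjectivity. The paper closes surjectivity differently: every node of a real plane curve is an indefinite critical point of its defining equation, and every bcc contains an interior local maximum of $f_W$, which by 2(ii) is a nondegenerate, hence negative definite, critical point.

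Part 3 is where the proposal has a genuine gap. The completeness strategy --- ``uniform bounds on $V$ over $f_W^{-1}(K)$'' --- cannot work, because any lift $V$ of $\partial/\partial t$ satisfies $\|V(p)\|\ge 1/\|\nabla f_W(p)\|$ pointwise, and $\nabla f_W$ tends to $0$ along each fiber. Concretely, for $t_0\in(0,1)$ the points $p_n=(x_n,0)$ with $\sqrt{x_n}=\arcsin\sqrt{t_0}+n\pi$ lie on $X_{\AA,t_0}$, and there $\partial_yf_{\AA}=0$ while $|\partial_xf_{\AA}|=|s(x_n)c(x_n)|=\sqrt{t_0(1-t_0)}/\sqrt{x_n}\to 0$; the analogous sequence exists for $\DD$. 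Hence \emph{every} horizontal vector field is unbounded on every fiber, and no choice of partition of unity can repair this, since at $p_n$ both partial derivatives are small simultaneously. (The flow is still plausibly complete, because the blow-up is only of order $\sqrt{|x|}$, but that requires establishing an at-most-linear growth estimate on $V$ near this sequence of degenerating ramification points --- a genuine additional argument you have not supplied.) The paper avoids the issue entirely by a structural route: the projection to the $x$-line exhibits $X_{W,t}$ as a proper double cover of $\C_W$ branched over the solutions of $xs^2(x)=t$, and the key Fact that $x\mapsto xs^2(x)=\sin^2\sqrt{x}$ is an unramified covering over $\C\setminus\{0,1\}$ (proved via the inverse $\sqrt{x}=\pm\sin^{-1}\sqrt{t}$ and the location of the singularities of $\sin^{-1}$) allows one to move the branch points by a diffeomorphism of $\C_W$ supported in the disjoint sheets of $\pi^{-1}(\U)$ and then lift it to the double cover. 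That covering-space mechanism is what replaces Ehresmann here, and it, or an equivalent control at infinity, is missing from your proposal.
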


\begin{proof} {\bf 1.} We proceed direct calculations separately for each type.

\smallskip\noindent
{\bf $\AA$:} 
The defining equations for $C_\AA$ are $\partial_xf_\AA\!\!=\!cs\!=\!0, \partial_yf_\AA\!\!=\!-2y\!=\!0$. Hence, $C_\AA\!=\!\{(x,0) \mid s(x)\!=\!0 \text{ or } c(x)\!=\!0\}$, where we have 
\vspace{-0.2cm}
\[
f_\AA(x,0) =
\begin{cases}
0 \quad \text{ if } s(x)=0,\\
1\ \quad \text {if } c(x)=0.
\end{cases}
\]
{\bf $\DD$:} 
The defining equations for $C_\DD$ are $\partial_xf_\DD\!\!=\!cs-y^2\!=\!0, \partial_yf_\DD\!\!=\!-\!2xy\!=\!0$. Hence, $C_\DD\!\!=\!\{(0,\pm1)\}\cup\{ (x,0)\mid s(x)\!=\!0 \text{ or }c(x)\!=\!0\}$, where we have 
\vspace{-0.2cm}
\[f_\DD(0,\pm1)\!=\!0 \quad \text{and} \quad 
f_\DD(x,0) =
\begin{cases}
0 \quad \text{ if } s(x)=0,\\
1\ \quad \text {if } c(x)=0.
\end{cases}
\]
\smallskip\noindent
{\bf 2.} i)  Due to the descriptions of $C_W$ in {\bf 1.}, we have only to show that the zero loci of  $s(x)\!=\!0$ and $c(x)\!=\!0$ are real numbers. This follows from the fact that the infinite product expressions \eqref{eq:3} and \eqref{eq:4} are absolutely convergent and the zero loci of $s(x)\!=\!0$ and $c(x)\!=\!0$ are given by the union of zero locus of factors of the expressions, respectively.

\smallskip\noindent
ii)   Let us calculate the Hessian at a critical point. 

The statement for the two critical points $(0,\pm1)$ on $X_{\DD,0}$ can be verified directly.  The other critical points are  on the $x$-axis, i.e.\ one always has $y=0$. Since  $\partial_x\partial_y f_W\mid_{y=0}=0$ for each type $W\in\{\AA.\DD\}$, the Hessian is a diagonal matrix of the form 
\[
[\partial_x(c(x)s(x)), -2 ]_{diag} \quad \text{ for type }  P=\AA ,
\]
\[
[\partial_x(c(x)s(x)), -2x]_{diag} \quad \text{ for type }  P=\DD,
\]
where the second diagonal component is always negative. We calculate the sign of the first diagonal component by 

\centerline{$\partial_x(c(x)s(x))\mid_{c=0}=-\frac{1}{2}s^2=-\frac{1}{2x}<0$ and $\partial_x(c(x)s(x))\mid_{s=0}=\frac{1}{2x}>0$, 
}
\noindent
implying the statement {\bf ii)}. 

\smallskip\noindent
iii)  Combining the explicit descriptions of the set $C_{W,0},\ C_{W,1}$
 in Proof of {\bf 1.}\ with Observations 2.\ and 3.\ in  \S2.2, the
 correspondences are defined and are injective (see Figure 1 and
 2.). So, we need only to show their surjectivity. But, this is
 again trivial since i) any node of a curve is a critical point of the defining equation of the curve, where Hessian is indefinite, and  ii) inside of any bounded connected component of a complement of a real curve in $\R^2$, there exists at least a point where $f_W$ takes local maximum, then the Hessian at the point should be negative definite since we saw in {\bf 2. ii)} that it is already non-degenerate.

\smallskip\noindent
{\bf 3.} 
Let us show that the fibration \eqref{eq:10} is locally topologically
 trivial. Since our map is neither proper nor extendable to a suitably
 stratified proper map (recall Remark 1.3.), we cannot use standard technique such as Thom-Ehreshmann theorems. Instead, we use an elementary fact that $X_{W,t}$ is a ramified covering space: namely,
in view of the equations \eqref{eq:8} and \eqref{eq:9}, the projection map $(x,y)\in\C^2\mapsto x\in \C$ to the $x$-plane induces a proper and ramified double covering maps  $\pi_{W,t}$: 
\begin{equation}
\label{eq:11}
X_{\AA,t}\!\to \C  \ (t\!\in\!\C)  \text{\ \ and\ \ }   X_{\DD,t}\!\to \C\setminus\!\{0\} \ (t\!\in\! \C\setminus\!\{0\}),
\end{equation}
(for $X_{\DD,0}$, see \footnote{
Since the fiber $X_{\DD,0}$ contains an irreducible component $L\!:=\!\{x\!=\!0\}$, the map on $X_{\DD,0}$ is not a covering, but its restriction to $X_{\DD,0}\setminus L$ is a covering.
}).
Let us denote by $\C_W$ the base space of this covering, i.e.\ $\C_W:=\C$ if $W=\AA$ and $:=\C\setminus\{0\}$ if $W=\DD$. 
  In view of the defining equation of $X_{W,t}$, the covering is ramifying at $X_{W,t}\cap\{y\!=\!0\}$, i.e.\ at solutions $x\in \C_w$ of the equation 
\begin{equation}
\label{eq:12}
xs^2(x)-t\ =\ 0,
\end{equation}
which, apparently, has infinitely many solutions, depending on $t\in \C$. 

\medskip
We, now, state an elementary but a crucial fact on the function
 $xs^2$.

\medskip
\noindent
{\bf Fact.} {\it The correspondence $\pi:\C_W\to \C, \  x\mapsto t\!:=\!xs^2(x)\!=\!\mathrm{sin}^2(\sqrt{x})$ is ramifying exactly and only at the inverse images of the points $0$ and $1$, and induces a (topological) covering map over $\C\setminus\{0,1\}$.}

\medskip
\noindent
{\it Proof} of  {\bf Fact.} 
The critical points of the map $t\!=\!xs^2(x)$ are given by the equation
 $s(x)c(x)=0$, and are exactly the points where $t\!=\!0$ or $1$)
 (recall Proof of {\bf 1.}). Thus, the restricted map
 $\pi':=\pi|_{\pi^{-1}(\C\setminus\{0,1\})}$ over $\C\setminus\{0,1\}$
 is a locally homeomorphism. To see that $\pi'$ is a covering (i.e.\ a
 proper map  on each component of an inverse image of a simply connected
 open subset of $\C\setminus\{0,1\}$), we need to show that the inverse map of $xs^2(x)\!=\!t$ as a
 multivalued function in $t$ is analytically continuable everywhere on
 the set $\C\setminus\{0,1\}$.  Since the equation is equivalent to
 $\sqrt{x}\!=\!\pm\sin^{-1}(\sqrt{t})$, this fact follows from the fact
 that the multivalued function $\sin^{-1}(u)$ has singular points (i.e.\
 points where
 the function cannot be analytically continued) only at $u\!=\!\pm1$,  easily seen from the integral expression 
$\sin^{-1}(u)\!=\! \int^u_0\frac{du}{\sqrt{1-u^2}}$. 
$\Box$

\smallskip
Owing to {\bf Fact}, 
 we find  a disc neighbourhood $\U$ for any $t_0\!\in\!
 \C\setminus\!\{0,1\}$ so that $\pi^{-1}(\U)$ decomposes into components
 homeomorphic to $U$. For each $x_i\!\in\! \pi^{-1}(t_0)$ ($i\in I$
 index set), let $s_i(t)$ be the function on $t\!\in\! \U$, defining a section of $\pi$ such that $s_i(t_0)\!=\!x_i$ (actually, $s_i(t)\!=\!\big(\sqrt{x_i}\!+\!\int_{\sqrt{t_0}}^{\sqrt{t}}\frac{du}{\sqrt{1-u^2}}\big)^2$ for choices of $\sqrt{t_0}$ and $\sqrt{x_i}$ such that $\sqrt{t_0}\!=\!\sin{(\!\sqrt{x_i})}$ and path of integral in the connected component of $\pm\sqrt{\U}$ containing $\sqrt{t_0}$).

We can find a differentiable map $\varphi:\U\!\times\! \C_W\!\to\! \C_W$ such that i) $\varphi(t_0,x)\!=\!x$, ii) for each $t\!\in\! U$, the $\varphi_t:=\varphi(t,\cdot)$ is a diffeomorphism of $\C_W$, and iii) for each $i\in I$, $\varphi(t,s_i(t))$ is constant (equal to $s_i(t_0)\!=\!x_i$). 
The diffeomorphism $\varphi_t$ can be uniquely lifted to a diffeomorphism $\hat\varphi_t:X_{W,t}\simeq X_{W,t_0}$ of the double covers such that $\varphi_t\circ \pi_{W,t}=\pi_{W,t_0}\circ \hat\varphi_t$. The $\hat\varphi_t$ gives the local trivialization of \eqref{eq:10}. 
\ \ $\Box$

This completes a proof of Theorem 1., 2. and 3.
\end{proof}

\section{Vanishing cycles  of types $\AA$ and $\DD$}

We show that the middle homology group of a generic fiber of the map
\eqref{eq:5} has basis consisting of vanishing cycles. The intersection
form among them forms the {\it principal quiver}\footnote
{ We mean by a {\it quiver} an oriented graph. It is called {\it principal}, if the set of vertices's has a bipartite decomposition $\Gamma_0\sqcup \Gamma_1$ such that the head (resp. tail) of any edge belongs to  $\Gamma_0$ (resp. $\Gamma_1$) (e.g.\ Figure 3 and 4).  See [Sa2,3].}
of type $\AA$ or $\DD$. 

\subsection{Middle homology groups }

In the present paragraph, we describe the middle homology group of the general fibers
of \eqref{eq:10} in terms of vanishing cycles of the function
$f_W$ of type {\small $W\in\{\AA,\DD\}$}.

\smallskip 
{\bf Vanishing cycles:} For a critical point $c\in
C_W=C_{W,0}\sqcup C_{W,1}$, we define an oriented 1-cycle
$\gamma_{W,c}$  in $X_{W,t}$ for  $t\in (0,1)$ as follows.

Due to Theorem 2, we can choose holomorphic local coordinates $(u,v)$ in
a neighborhood $\U$ of $c$ in $\mathbf{X}_W$ such that i) $u$ and $v$
are real valued on $\U_\R:=\U\cap\R^2$, ii)
$\frac{\partial(u,v)}{\partial(x,y)}|_{\U_R}\!>\!0$ and iii)
$f_W|_{\U}\!=\!u^2-v^2$ if $c\in C_{W,0}$ and
$f_W|_{\U}\!=\!1\!-\!u^2-v^2$ if $c\in C_{W,1}$.  Then, define cycles:
\begin{equation}
\label{eq:13}
 \gamma_{W,c}:=\!\begin{cases}  (\sqrt{t}\cos(\theta),
  \sqrt{-1}\sqrt{t}\sin(\theta))\  (0\le\theta\le2\pi), \text{ if }
  c\!\in\! C_{W,0}\\ 
(\sqrt{1\!-\!t}\cos(\theta),
 \sqrt{1\!-\!t}\sin(\theta))\  (0\!\le\!\theta\!\le\!2\pi),  \text{ if }
  c\!\in\! C_{W,1}. 
\end{cases}\!\!\!\!\!\!\!\!\!\!\!\!\!\!\!
\end{equation}

\noindent
{\bf Fact.}\ {\it The oriented cycle $\gamma_{W,c}$ in the surface $X_{W,t}$ is, up to free homotopy, unique and independent of a choice of coordinates $(u,v)$. }

\smallskip
\noindent
{\bf Definition.}\ We shall denote the homology class in
$\mathrm{H_1}(X_{W,t},\Z)$ of the cycle $\gamma_{W,c}$ by the same 
$\gamma_{W,c}$, and call it the {\it vanishing cycle} of the function
$f_W$ at the critical point $c\in C_W$ {\small (vanishing along the path $t\!\downarrow\! 0$ or $t\!\uparrow\! 1$)}.

\medskip
\noindent
{\bf Sign convention} of intersection numbers 
of 1-cycles on\!  $X_{W,t}$.

\smallskip
\noindent
{\bf i)} Let $I$ be the skew symmetric intersection form between two 
oriented 1-cycles on a oriented surface. Then we define the convention of the sign of intersection number locally as follows: 
\[\begin{array}{lllllllll}
  & \searrow \   \!\nearrow^{\ \gamma_2}&\quad& & \searrow \ \nearrow^{\ \gamma_1}&
 \vspace{-0.2cm}
\\
 \noindent
{\bf Fig.3}\qquad I(\gamma_1,\gamma_2)=1\ \text{if}& \ \ \times &\ , \ &I(\gamma_1,\gamma_2)=-1\ \text{if}& \ \ \times\!  &
\vspace{-0.2cm}
\\
  &\! \nearrow \   \searrow_{\ \gamma_1}&\quad && \nearrow \   \searrow_{\ \gamma_2}&\\
 \end{array}
\] 
{\bf ii)} The orientation of the surface $X_{W,t}$ is {\footnotesize $\sqrt{-1}$}$dz\!\wedge\! d\bar{z}\!=\!2dx\!\wedge\! dy$ for a local holomorphic coordinate $z\!=\!x\!+\!iy$ on $X_{W,t}$.\
{\it Eg.}\ Cycles $\gamma_x$ and $\gamma_y$ locally homotopic to $x$-axis and $y$-axis intersects with $I(\gamma_x,\gamma_y)\!=\!1$ at $z\!\!=\!\!0$.

\smallskip

\begin{theorem}\!
{\bf 4.}  The middle  homology group of $X_{W,t}$,\ $t\!\in\! (0,\!1)$ is given by
\begin{equation}
\label{eq:14}
\mathrm{H}_1(X_{W,t},\Z) \ \simeq\ \mathrm{H}_W : = \ \mathrm{H}_{W,0}\ \oplus \ \mathrm{H}_{W,1},
\vspace{-0.1cm}
\end{equation}
where
\vspace{-0.2cm}
\begin{eqnarray}
\label{eq:15}
 \mathrm{H}_{W,0}& := & \oplus_{c\in C_{W,0}}\Z \gamma^{}_{W,c} \\
\label{eq:16}
 \mathrm{H}_{W,1}& := & \oplus_{c\in C_{W,1}}\Z \gamma^{}_{W,c} 
\vspace{-0.2cm}
\end{eqnarray}
\vspace{-0.2cm}
are formally defined free abelian group spanned by vanishing cycles.  

\smallskip
\smallskip
{\bf 5.}
Let $I_W: \mathrm{H}_1(X_{W,t},\Z) \times \mathrm{H}_1(X_{W,t},\Z) \to \Z$ 
be the intersection form on the middle homology group. Then we have 
\begin{equation}
\label{eq:17}
I_W\ = \ J_W -\  ^t\!J_W
\end{equation}
where $J_W$ and  $^t\!J_W$ are integral bilinear forms on $\mathrm{H}_{W}$ given by
\vspace{-0.3cm}
\begin{equation}
\label{eq:18} 
J_W(\gamma^{}_{W,c}, \gamma^{}_{W,c'}):=
\begin{cases}
1  & \text{ if } c=c',\\
-1  & \text{ if } c\in C_{W,0},\ c'\in C_{W,1} \text{ and } c\in \overline{B}_{c'},\\
0 & else,
\end{cases}
\vspace{-0.3cm}
\end{equation}
and
\vspace{-0.3cm}
\begin{equation}
\label{eq:19} 
{}^t\!J_W(\gamma^{}_{W,c}, \gamma^{}_{W,c'}):=
\begin{cases}
1  & \text{ if } c=c',\\
-1  & \text{ if } c\in C_{W,1},\ c'\in C_{W,0} \text{ and } c'\in \overline{B}_{c},\\
0 & else.
\end{cases}
\end{equation}
\end{theorem}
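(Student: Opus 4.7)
The plan is to use the $x$-projection $\pi_{W,t}\colon X_{W,t}\to\C_W$ from Theorem~3, which realizes $X_{W,t}$ as a branched double cover, and to compute both $H_1(X_{W,t},\Z)$ and its intersection form directly from the topology of this cover. The ramification points of $\pi_{W,t}$ for $t\in(0,1)$ are the positive real solutions of $xs^2(x)=\sin^2\sqrt{x}=t$, forming a discrete set $R(t)=\{0<r_1(t)<r_2(t)<\cdots\}$; by the monotonicity of $\sin^2\sqrt{x}$ on its half-arches one finds $r_{2n-1}(t)<(2n-1)^2\pi^2/4<r_{2n}(t)$ inside $((n-1)^2\pi^2,n^2\pi^2)$. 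As $t\uparrow 1$ the pair $\{r_{2n-1},r_{2n}\}$ collapses to $(2n-1)^2\pi^2/4\in C_{W,1}$, and as $t\downarrow 0$ the pair $\{r_{2n},r_{2n+1}\}$ collapses to $n^2\pi^2\in C_{W,0}$; for $W=\DD$ additionally $r_1(t)\to 0$, to be correlated with the extra critical points $(0,\pm 1)\in C_{\DD,0}$.

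Next I would translate the local Morse description \eqref{eq:13} of $\gamma_{W,c}$ back through $\pi_{W,t}$. For $c$ on the $x$-axis, $\pi_{W,t}(\gamma_{W,c})$ is a loop in $\C_W$ encircling exactly the pair of $r_i$'s that collapses to $c$. For $W=\DD$ and $c=(0,\pm 1)$, expanding $f_\DD$ at $c$ with $y=\pm 1+y'$ gives $f_\DD\approx -2xy'-x^2/3$; diagonalizing the quadratic form and inserting it into the Morse parameterization produces $x(\theta)=\mp\sqrt{3t}\,e^{\mp i\theta}$, so $\pi_{W,t}(\gamma_{W,(0,\pm 1)})$ is a small loop encircling $\{0,r_1(t)\}$ in the base, while the cycle itself lies on the local sheet $y\approx\pm 1$ of $X_{\DD,t}$.

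For the homology computation: since $y^2=(xs^2(x)-t)/x$ has a simple pole at $x=0$, the monodromy of $y$ around $0$ is $-1$ in the $\DD$ case, while the cover is unramified everywhere except at $R(t)$ (and, for $\DD$, at $0$). For $W=\AA$, a cell-decomposition of $\C\setminus R(t)$ with vertices at the $r_i$ lifts to a cell-structure on $X_{\AA,t}$, and the standard Euler-characteristic computation (rank $H_1=n-1$ for a double cover branched over $n$ real points, taken to the direct limit) shows $H_1(X_{\AA,t},\Z)$ is free with one generator per consecutive pair $\{r_i,r_{i+1}\}$, matching the vanishing cycles above and giving \eqref{eq:14}. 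For $W=\DD$, let $\tilde X_{\DD,t}\to\C$ denote the branched cover obtained by adjoining $x=0$ as a branch point, with single preimage $p_0$; then $X_{\DD,t}=\tilde X_{\DD,t}\setminus\{p_0\}$, and $H_1(\tilde X_{\DD,t})$ is free on loops around pairs $\{0,r_1\},\{r_1,r_2\},\ldots$ while removing $p_0$ contributes one further generator (the small loop around $p_0$). Matching with the sheet-separation of the previous step, the pair-$\{0,r_1\}$ generator and the $p_0$-generator are identified with $\gamma_{W,(0,+1)},\gamma_{W,(0,-1)}$, completing \eqref{eq:14}.

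For the intersection form: if two vanishing cycles have disjoint associated ramification pairs, one picks disjoint representative loops in the base, giving disjoint lifts and $I_W=0$, which accounts for all ``$0$''-entries in \eqref{eq:18}--\eqref{eq:19}. If two pairs share a single branch point $r_i$, the lifts meet transversally in one point inside the standard local double-cover model $u^2=x-r_i$, and Sign Convention~{\bf i},{\bf ii} together with the orientation of $X_{W,t}$ yield an intersection number $\pm 1$ matching the formula $J_W-{}^tJ_W$. The combinatorial adjacency ``$c\in\overline{B}_{c'}$'' corresponds exactly, via the enumeration of the first step, to the associated ramification pairs sharing a common endpoint, so the formulas \eqref{eq:18}--\eqref{eq:19} follow. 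The main obstacle is the $\DD$ analysis near $x=0$: cleanly distinguishing the two sheets, correctly identifying $\gamma_{W,(0,\pm 1)}$ with the two ``exceptional'' basis classes (one pair-cycle and one puncture-cycle), and verifying the signs of the three intersections $I(\gamma_{W,(0,+1)},\gamma_{W,(\pi^2/4,0)})$, $I(\gamma_{W,(0,-1)},\gamma_{W,(\pi^2/4,0)})$, $I(\gamma_{W,(\pi^2,0)},\gamma_{W,(\pi^2/4,0)})$ which together produce the trivalent vertex characteristic of the $\DD$-quiver.
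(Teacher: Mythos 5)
Your strategy is sound and, for the homology statement (Theorem 4), genuinely different from the paper's. The paper never computes $\mathrm{H}_1(X_{W,t},\Z)$ from the covering directly: it first establishes the intersection numbers via the local Morse coordinates of \eqref{eq:13} (with a careful comparison of $du$, $du'$, $dv$, $dv'$ at the intersection point to fix the sign), and then proves \eqref{eq:14} by (i) retracting $X_{W,0}$ onto its real locus to get $\mathrm{H}_1(X_{W,0})\simeq \mathrm{H}_{W,1}$, (ii) retracting $f_W^{-1}([0,t])$ onto $X_{W,0}$ to obtain an exact sequence exhibiting $\mathrm{H}_{W,1}$ as a direct factor, and (iii) deducing injectivity of $\mathrm{H}_{W,0}$ from the nondegeneracy of the already-computed pairing against $\mathrm{H}_{W,1}$ — which leaves exactly one possible relation, $\gamma_{D,0}^+=\gamma_{D,0}^-$, killed by the involution $(x,y)\mapsto(x,-y)$ and the degree of $(\pi_D)_*$ on $\mathrm{H}_1(\C_D,\Z)$. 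Your route instead computes $\mathrm{H}_1$ of the branched double cover outright (Euler characteristic on an exhaustion by finitely-branched pieces, plus the puncture class at $x=0$ for $\DD$), which buys an explicit geometric basis independent of the intersection form and makes the $A_\infty$/$D_\infty$ combinatorics of shared branch points transparent; the paper's route is shorter because the intersection form does the linear-independence work for it. Two points you defer do need to be carried out for a complete proof: the actual sign computation at a shared branch point (getting $\pm1$ is immediate, but the consistent choice $I_W(\gamma_{W,c},\gamma_{W,c'})=-1$ for $c\in C_{W,0}$, $c'\in C_{W,1}$ requires tracking orientations through the local model, which is where the paper spends most of its effort on Theorem 5), and the $\DD$ basis change at the trivalent vertex — note that $\gamma_{D,0}^{\pm}$ are \emph{not} literally the pair-$\{0,r_1\}$ cycle and the puncture cycle, but the two boundary circles of the pair of pants $\pi^{-1}(D)\setminus\{p_0\}$, related to that pair by a unimodular transformation (the relation $\beta_1+\beta_2+\delta=0$ in the pair of pants); your local expansion $x(\theta)=\mp\sqrt{3t}\,e^{\mp i\theta}$ on the sheets $y\approx\pm1$ is exactly what is needed to pin down which is which and to get the three signs at the vertex, so the framework closes, but the verification is not yet on the page.
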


\medskip
\noindent
{\bf Remark.} The meaning to use the form $J_W$ shall be clarified in \S3.3.
\begin{proof}
We first calculate intersection numbers between vanishing cycles
 $\gamma_{W,c}$ and $\gamma_{W,c'}$ as  given in  {\bf 5.}

Suppose both critical points $c,c'$ belong to $C_{W,0}$ (resp.\ $C_{W,1}$). If $c\!\not=\!c'$ then we, for $t$ close enough to $0$ (resp.\ $1$),  the supports of the vanishing cycles are close to $c$ and $c'$ so that they are disjoint, i.e.\ $\gamma_{W,c}\!\cap\! \gamma_{W.c'}\!=\!\emptyset$ and we get $I_W(\gamma_{W,c},\gamma_{W,c'})\!=\!0$. Then, this equality holds for any $t\!\in\!(0,1)$. 
If  $c\!=\!c'$, then $I_W(\gamma_{W,c},\gamma_{W,c})=0$ due to skew-symmetry of $I_W$.

Next, we consider a cycle $\gamma_{W,c}$ for $c\!\in\! C_{W,0}$ and a cycle $\gamma_{W,c'}$ for $c'\!\in\! C_{w,1}$.
From their expressions in \eqref{eq:13}, we observe the following two facts:

\smallskip
i) The cycle $\gamma_{w,c}$ intersects only with each of connected component of $\R^2\!\setminus\! X_{W,0,\R}$ adjacent to $c$ at one point  $(u,v)=(\varepsilon\sqrt{t},0)$ for $\varepsilon\in\{\pm1\}$.

ii) The underlying set $|\gamma_{W,c'}|$ is presented by a circle of radius $1\!-\!t$ in the bcc $B_{c'}$ containing $c'$, i.e.\ it is equal to $\{(u',v')\!\in\! B_{c'}\mid f_W(u',v')\!=\!t\}$. 

\smallskip
These means that  cycles $\gamma_{W,c}$ and $\gamma_{W,c'}$ for the same $t\in(0,1)$ intersect  if and only if the critical point $c$ is adjacent to the bounded component $B_{c'}$, and, then, they intersect transversely at one point, say $p$. 
Let  $(u',v')$ be the coordinates for the cycle $\gamma^{}_{W,c'}$ in \eqref{eq:13}. Then, by an orientation preserving orthogonal linear transformation of the coordinates, the intersection point $p$ may be given by $(u',v')=(\sqrt{1\!-\!t},0)$

We determine the sign of the intersection as follows: 
in a neighbourhood of $p$, we have an equality $f_W\!=\!u^2\!-\!v^2\!=\!1\!-\!u'^2\!-\!v'^2$. Then the differentiation at $p$ of the equation gives $df|_p\!=\!\varepsilon\sqrt{t}du|_p\!=\!-\!\sqrt{1\!-\!t}du'|_p$. Since $du\wedge dv|_p=c du' \wedge dv'|_p$ for some positive $c\in\R_{>0}$, we get 

\smallskip
a)
\centerline{
$\frac{\partial v}{\partial v'}|_p\ =\ \varepsilon c\frac{\sqrt{t}}{\sqrt{1-t}}$. }

\smallskip
On the other hand, since $du$ and $d u'$ are co-normal vectors to $X_{W,t}$ at $p$ (i.e.\ $df|_p\ /\!/\ du|_p\ /\!/\ du'|_p$), we  use $dv$ and $d v'$ as for complex coordinates of the 1-dimensional complex tangent space $T(X_{W,t})_p$ at $p$, which are compatible with the sign convention ii) of the surface $X_{W,t}$. 

Using these coordinates, the infinitesimal direction $\frac{\partial}{\partial \theta}|_p$ of $\gamma^{}_{W,c}$ at $p$ is evaluated by 

b)
\centerline{
$\frac{\partial v}{\partial \theta}|_p\ =\ \varepsilon\sqrt{-1}\sqrt{t}$ 
}

\smallskip
\noindent
and the infinitesimal direction $\frac{\partial}{\partial \theta'}|_p$ of $\gamma^{}_{c',1}$ at $p$ is evaluate by

\smallskip
c) 
\centerline{
$\frac{\partial v'}{\partial \theta'}|_p\ =\ \sqrt{1-t}$. 
}

\smallskip
\noindent
Combining a), b) and c), we obtain that the angle from the cycle $\gamma^{}_{W,c'}$ to the cycle $\gamma^{}_{W,c}$ at their intersection point $p$ is given by the angle of the complex number 

\smallskip
d)
\centerline{
$\big(\frac{\partial v}{\partial \theta}|_p / \frac{\partial v'}{\partial \theta'}|_p\big)\ /\ \frac{\partial v}{\partial v'}|_p\ =\ \frac{\sqrt{-1}}{c}$, 
}

\smallskip
\noindent
i.e.\ the angle is $\frac{\pi}{2}$. Then due to our sign convention, we obtain
\[
I_W(\gamma^{}_{W,c},\gamma^{}_{W,c'})= -1 \quad \text{and} \quad
I_W(\gamma^{}_{W,c'},\gamma^{}_{W,c})= 1,  
\]
which is independent of the sign $\varepsilon\in\{\pm1\}$. Thus, \eqref{eq:17} is shown.

\medskip
Finally in the following i)-v), we prove {\bf 4.}  

We formally put \eqref{eq:15} and \eqref{eq:16}.

\noindent
i) Let us first show a natural isomorphism.
\begin{equation}
\label{eq:20}
\mathrm{H}_1(X_{W,0},\Z)\  \simeq\ \mathrm{H}_{W,1}. 
\end{equation}
{\it Proof} of \eqref{eq:20}.  We first show that $X_{W,0,\R}$ is a deformation retract of $X_{W,0}$. 
For the proof of it, recall the double cover expression of $X_{W,0}$ over $\C_w$, used in the proof of {\bf Theorem 3}. In case of type $W=\AA$, the deformation retract of the plane $\C_W$ to the half real axis $\R_{\ge0}$ induces the retract of the covering space $X_{W,0}$ to its real form $X_{w,0,\R}$. In case of type $W=\DD$, we do the retraction irreducible-componentwisely to the real axis $\R$ (details are left to the reader). Thus, in view of Figure 1 and 2, we have a natural isomorphism:
\[
\qquad\qquad  \qquad\qquad \mathrm{H}_1(X_{W,0},\Z)\ \simeq\ \mathrm{H}_1(X_{W,0,\R},\Z)\ \simeq\ \mathrm{H}_{W,1}. \qquad\qquad  \Box \ )
\]

\noindent
 ii) Using the double cover expressions of fibers $X_{W,t}$ in the proof of {\bf Theorem 3.}, we can show that $f_W^{-1}([0,t])$ ($t\in (0,1)$) retracts to its subset $X_{W,0}$. Then composing with the inclusion map $X_{W,t}\subset f_W^{-1}([0,t])$, we get an exact sequence
\[
\mathrm{H}_{W,0}\ \rightarrow\ \mathrm{H}_1(X_{w,t},\Z)\ \overset{r}{\rightarrow}\ \mathrm{H}_1(X_{W,0},\Z)\ \rightarrow\ 0,
\]
where the restriction of $r$ to the submodule $H_{W,1}$ composed  with the isomorphism \eqref{eq:20} induces the identity on $H_{W,1}$.
This implies that $\mathrm{H}_{W,1}$ is a factor of $\mathrm{H}_1(X_{W,t},\Z) $. 

\smallskip
\noindent
iii)
What remains to show is that $\mathrm{H}_{W,0}$ is injectively embedded in $\mathrm{H}_1(X_{W,t},\Z) $. This can be partially shown by using the non-degeneracy of the intersection relations \eqref{eq:18} as follows.

Let $\gamma\!\in\! \mathrm{H}_{W,0}$ be a non-zero element, whose image in  $\mathrm{H}_1(X_{W,t},\Z) $ is zero. Then solving the relation $I_W(\gamma,\gamma_{W,c})\!=\!0$ for $c\!=\!c_{W,1}^{(n)}\!\in\! C_{W,1}$ (see Notation in \S3.2) from large enough $n\!\in\!\Z_{>0}$ back wards to 1, we see successive vanishings of the coefficients of $\gamma$, and finally see that $\gamma$, up to a constant factor, is equal to $\gamma_{D,0}^+\!-\! \gamma_{D,0}^-$ (see \S3.2 for Notation $\gamma_{D,0}^+$ and $\gamma_{D,0}^-$).
 In order to show that this is not possible, we prepare a fact.

\medskip
\noindent
iv) {\bf Fact.}\ {\it The function $f_W$ of type $W$ is invariant by the involution $\sigma\!:\!\mathbf{X}_W\!\to\! \mathbf{X}_W,\ (x,y)\!\mapsto\! (x,-\!y)$ on its domain, i.e.\ $f_W\circ\sigma\!=\!f_W$. The induced involution on the surface $X_{W,t}$, denoted again by $\sigma$, is equivariant with the covering map $\pi_{W,t}$ \eqref{eq:11}, i.e.\ $\pi_{W,t}\circ\sigma=\pi_{W,t}$. 
Then, one has
} 
$\sigma_*(\gamma_{W,c}) =-\gamma_{W,c} $
{\it for all $c\in C_W$, except for the following two cases }
\[
\sigma_*(\gamma_{D,0}^+)=-\gamma_{D,0}^-  \ \text{
 and }\ 
\sigma_*(\gamma_{D,0}^-)=-\gamma_{D,0}^+ .
\]
{\it Proof} of {\bf Fact.} Except for the  cases $\gamma_{D,0}^+$ and $\gamma_{D,0}^-$, we can choose the
 coordinate in \eqref{eq:13} in such manner that $\sigma(u,v)=(u,-v)$. \qquad  $\Box$

\medskip 
\noindent
v) Assuming $\gamma_{D,0}^+\!=\! \gamma_{D,0}^-$, let us show a contradiction. Consider the homomorphism $(\pi_D)_*: \mathrm{H_1}(X_{D,t},\Z)\to \mathrm{H_1}(\C_{D},\Z)\simeq\Z$.
Above {\bf Fact.}\ implies $(\pi_{D})_*(\gamma_{D,0}^+)\!=\! (\pi_{D}\circ\sigma)_*(\gamma_{D,0}^+)\!=\! (\pi_{D})_*\circ\sigma_*(\gamma_{D,0}^+)\!=\!- (\pi_{D})_*(\gamma_{D,0}^-)$ which, by the assumption, is equal to  $-(\pi_{D})_*(\gamma_{D,0}^+)$. Thus, we get $(\pi_{D})_*(\gamma_{D,0}^+)\!=\!0$. This contradicts to the fact that $(\pi_{D})_*(\gamma_{D,0}^+)$ generates $\mathrm{H_1}(\C_{D},\Z)\!\simeq\!\Z$ (observed easily from the fact that the equation $x\!=\!0$ defines i) a branch of $X_{D,0,\R}$ at the nodal point $c_{D,0}^+$ and also ii) the puncture in $\C_D$, and from the description of $\gamma_{D,0}^+$ in \eqref{eq:13}).

\smallskip
This completes a proof of Theorem 4. and 5.
\end{proof} 
\begin{remark}
In the step v) in above proof, we may use 
a $\sigma$-invariant form $\omega\!:=\!\mathrm{Res}\big[\frac{ydxdy}{f_D-t}\big]$. Since $\int_{\gamma_{D,0}^+}\!\omega\!=\!\int_{\gamma_{D,0}^+}\!\sigma^*(\omega)\!=\!\int_{\sigma_*(\gamma_{D,0}^+)}\!\omega\!=\!-\!\int_{\gamma_{D,0}^-}\!\omega$, the assumption $\gamma_{D,0}^+\!=\! \gamma_{D,0}^-$ implies $\int_{\gamma_{D,0}^+}\omega\!=\!0$. On the other hand, $\omega\!=\!\mathrm{Res}\big[\frac{ydxdy}{f_D-t}\big]\!=\!\frac{dx}{2x}|_{X_{D,t}}$, and hence $\int_{\gamma_{D,0}^+}\omega\!=\!\pm$${\tiny \sqrt{-1}}$$\pi\not=0$. A contradiction!

\end{remark}

\subsection{Quivers of type {\footnotesize $\AA$} and {\footnotesize $\DD$}}.

\noindent
We encode homological data of vanishing cycles of $f_W$ in a quiver $\Gamma_W$. 

\medskip
\noindent
{\bf Definition.} A quiver $\Gamma_W$ of type $W\in\{\AA,\DD\}$ is defined by

i) The set of vertices of $\Gamma_W$ is in bijective with $\{\gamma^{}_{W,c}\mid c\in C_{W,0}\cup C_{W,1}\}$.

ii) We put an oriented edge from $\gamma^{}_{W,c}$ to $\gamma^{}_{W,c'}$ if and only if $c\!\in\! C_{W,0},$ $ c'\!\in\! C_{W,1}$ and $c\in\overline{B}_{c'}$, that is, when $J_W(\gamma^{}_{W,c},\gamma^{}_{W,c'})=-1$.

\medskip
 Let us fix a numbering of elements in $C_{W,0}\cup C_{W,1}$ as follows.
\begin{eqnarray*}
C_{A,0}\!&\! =\!&\! \{ 
c_{A,0}^{(n)}:=
(n^2\pi^2,0)\}_{n\in\Z_{>0}} \\
C_{A,1}\!&\! =\!&\! \{ 
 c_{A,1}^{(n)}:=
((n-\frac{1}{2})^2\pi^2,0)\}_{n\in\Z_{>0}} \\
C_{D,0}\!&\! =\!&\! \{ c_{D,0}^{(n)}\!:=\!(n^2\pi^2,0)\}_{ n\in\Z_{>0}} 
\ \cup \ 
\{
 c_{D,0}^+\!:=\!
(0,1),  
 c_{D,0}^-\!:=\!
(0,\!-\!1)\} 
 \\ 
%
C_{D,1}\!&\! =\!&\! \{ 
 c_{D,1}^{(n)}:=
((n-\frac{1}{2})^2\pi^2,0) \}_{n\in\Z_{>0}}. 
\end{eqnarray*}
\vspace{-0.2cm}

\noindent
 According to them, the vertices of the quiver $\Gamma_W$ are numbered as below.

\[
{\large \Gamma_{\AA}:}\quad \ \gamma_{A,1}^{(1)} \longrightarrow \gamma_{A,0}^{(1)} \longleftarrow 
\gamma_{A,1}^{(2)} \longrightarrow \gamma_{A,0}^{(2)} \longleftarrow \gamma_{A,1}^{(3)} \longrightarrow \gamma_{A,0}^{(3)} \longleftarrow \quad \cdot\ \cdot\ \cdot \qquad
\]
\[
\begin{array}{llll}
&\gamma_{D,0}^{+}&\\
\vspace{-0.2cm}
&&\!\!\nwarrow\\ 
\vspace{-0.2cm}
\!\!\!\!{\large \Gamma_{\DD}:}& &\quad \gamma_{D,1}^{(1)} \longrightarrow \gamma_{D,0}^{(1)} \longleftarrow 
\gamma_{D,1}^{(2)} \longrightarrow \gamma_{D,0}^{(2)} \longleftarrow \gamma_{D,1}^{(3)} \longrightarrow \ \cdot\ \cdot\ \cdot  \\
\vspace{-0.2cm}
&&\!\! \swarrow\\
\vspace{-0.2cm}
&\gamma_{D,0}^{-}
\end{array}
\vspace{0.2cm}
\]
Note that the decomposition of the critical set $C_W$ into  $C_{W,0}\cup C_{W,1}$ gives arise the bi-partite (or principal) decomposition of the quiver $\Gamma_W$.

%

\subsection{Suspensions to higher dimensions}.

In this subsection, we briefly describe the suspensions of the results in previous subsections to higher dimensional cases.

 For a type $W\!\in\!\{\AA,\DD\}$ and $d\!\in\!\Z_{\ge1}$, let us
 introduce the $d$-th {\it suspension} $f_W^{(d)}$ of $f_W$ (where
 $f_W^{(1)}\!=\!f_W$) as the entire functions in $d+1$-variables $x,y$ and $\underline z=(z_2,\cdots,z_d)$ defined by
\begin{equation}
\label{eq:21}
f_W^{(d)}(x,y,\underline{z}):= f_W(x,y) - z_2^2 -\cdots- z_d^2.
\end{equation}
Then, replacing the function $f_W$ by $f_W^{(d)}$ and the domain
$\mathbf{X}_W\!=\!\C^2$ by $\mathbf{X}_W^{(d)}\!=\!\C^2\!\times\! \C^{d\!-\!1}$,
we obtain a holomorphic map \eqref{eq:5}$^{(d)}$ whose {\it fibers, denoted by $X_{W,t}^{(d)}$ ($t\!\in\!\C$), are Stein variety of complex dimension $d$}. 

Replacing, further, the real form $\R^2$ of $\mathbf{X}_W$ by the real form $\R^2\!\times\!\R^{d-1}$ of $\mathbf{X}_W^{(d)}$, {\bf Theorem 1., 2., 3.}\ in \S1.3 hold completely parallely for $f_W^{(d)}$, where the set of critical points of $f_W^{(d)}$ is bijective to that of $f_W$ by the natural embedding $\mathbf{X}_W\subset \mathbf{X}_W^{(d)}$ so that we identify them. Then the signature of Hessians of $f_W^{(d)}$ at points of $C_{W,0}$ is $(1,d)$ and that at points of $C_{W,1}$ is $(0,d\!+\!1)$. The suspended fibration shall be referred by \eqref{eq:10}$^{(d)}$.  The proof 
are reduced to the original case $d\!=\!1$. 

Applying $d\!-\!1$-times suspension $S$ on a homology class $\gamma$ in
$\mathrm{H}_1(X_{W,t},\Z)$, we obtain an element $S^{d\!-\!1}\gamma$ of
the middle homology group $\mathrm{H}_{d}(X_{W,t}^{(d)},\Z)$ of the
$d$-dimensional fiber $X_{W,t}^{(d)}$. In particular, the suspension  $S^{d\!-\!1}\gamma_{W,c}$ of a vanishing cycle $\gamma_{W,c}$ of $f_W$ at a critical point $c\in C_W$ is a vanishing cycle of $f_W^{(d)}$ at the same critical point, which, for simplicity, we shall denote again by $\gamma_{W,c}$. Then replacing $\mathrm{H}_1(X_{W,t},\Z)$ by the middle homology group $\mathrm{H}_{d}(X_{W,t}^{(d)},\Z)$, {\bf Theorem 4.}\ in \S2.1 holds completely parallely, where we keep notations \eqref{eq:14} and \eqref{eq:15}.

The intersection form $I_W^{(d)}$ on the middle $(=\!d)$-dimensional homology group is well known to be symmetric or skew-symmetric according as cycles are even or odd dimensional (i.e. according as $d$ is even or odd). It is also well known that $I_W^{(d)}(\gamma_{W,c},\gamma_{W,c})=(-1)^{\frac{d}{2}}2$ for even $d$-dimensional vanishing cycles. Therefore,  the formula \eqref{eq:17} of the intersection form in {\bf Theorem 5.}\  need to be slightly modified as in the following theorem, where we keep the notation $J_W$ and $^t\!J_W$ together with the formulae \eqref{eq:18} and \eqref{eq:19}. 

\bigskip
\noindent
{\bf Theorem 5$^{(d)}$.} {\it Let $I_W^{(d)}: \mathrm{H}_{d}(X_{W,t}^{(d)},\Z) \times \mathrm{H}_{d}(X_{W,t}^{(d)},\Z) \to \Z$ 
be the intersection form on middle-homology groups of the fibers of the
fibration \eqref{eq:10}$^{(d)}$. Then we have the following 4-periodic expression.
\begin{equation}
\label{eq:22}
I_W^{(d)}= (-1)^{[\frac{d}{2}]}J_W-(-1)^{[\frac{d-1}{2}]}\ ^t\!J_W.
\end{equation}
}
The proof of Theorem is standard, and is omitted. Actually, the form $I_W^{(d)}$
is symmetric for $d$ even and is skew symmetric for $d$ odd.

\smallskip
\noindent
{\bf Remark.} We may regard that the form $J_W$ is an infinite rank analogue of a {\it Seifert matrix} with respect to a ``suitable compactification'' of the three-fold $f_W^{-1}(S^1)$, where $S^1$ is a circle in the base space $\C$ of \eqref{eq:5} which encloses the two points $0$ and $1$. However, we do not pursue any further this analogy (see \S1.3 Remark and the next subsection \S2.4).

\subsection{Monodromy Transformations and  Coxeter elements}.

The fundamental group $\pi_1(\C\setminus\{0,1\}, t_0)$ with $t_0\in (0,1)$ of the base space of the fibration \eqref{eq:10}$^{(d)}$  has two generators $g_0$ and $g_1$ which are presented by circular paths in $\C\!\setminus\!\{0,1\}$ starting at $t_0$ and turning once around the point $0$ and $1$ counterclockwise, respectively. 
Let $\sigma_{W,0}^{(d)}$ (resp.\ $\sigma_{W,1}^{(d)}$) be the monodromy action of $g_0$ (resp.\ $g_1$) on the middle homology group \eqref{eq:14}$^{(d)}$ of the fiber of the family \eqref{eq:10}$^{(d)}$, which preserves the intersection form \eqref{eq:22}. Though the singular fibers $X_{W,0}^{(d)}$ and $X_{W,1}^{(d)}$ have infinitely many critical points, we can apply Picard-Lefschetz formula. That is,  for $u\!\in\! H_W:=H_{W,0}\!\oplus\! H_{W,1}$  
\begin{equation}
\begin{array}{llll}
\label{eq:23}
\sigma_{W,0}^{(d)}(u)\! &\!=\!&\! u +(-1)^{[\frac{d-1}{2}]}\!\!\sum_{c\in C_{W,0} } I_W^{(d)}(u,\gamma_{W,c})\gamma_{W,c}  \\
 &\!=\!&\! u + \sum_{c\in C_{W,0} }((-1)^{d\!-\!1}J_w(u,\gamma_{w,c})-J_W(\gamma_{W,c},u))\gamma_{W,c} \\
 &\!=\!&\!\!\begin{cases}
(-1)^{d\!-\!1} u & \text{if \ } u\!\in\! H_{W,0}\\
u-\sum_{c\in C_{W,0} }J_W(\gamma_{W,c},u)\gamma_{w,c}& \text{if \ } u\!\in\! H_{W,1}
\end{cases}
\end{array}
\end{equation}

\begin{equation}
\begin{array}{llll}
\label{eq:24}
\sigma_{w,1}^{(d)}(u)\!\! &\!=\!&\!\! u +(-1)^{[\frac{d-1}{2}]}\!\sum_{c\in C_{W,1}} I_W^{(d)}(u,\gamma_{W,c})\gamma_{W,c}  \\
 &\!=\!&\! u + \sum_{c\in C_{W,1} }((-1)^{d\!-\!1}J_W(u,\gamma_{W,c})-J_W(\gamma_{W,c},u))\gamma_{W,c} ,  \!\!\!\!\!\\
&\!=\!&\!\!\begin{cases}
u+(-1)^{d\!-\!1}\sum_{c\in C_{W,1} }J_W(u,\gamma_{W,c})\gamma_{W,c}& \text{if \ } u\!\in\! H_{W,0} \!\!\! \\
(-1)^{d\!-\!1} u & \text{if \ } u\!\in\! H_{W,1}.\!\!\!
\end{cases}
\end{array}
\end{equation}
Note that $\sigma_{W,0}^{(d)}=\sigma_{W,0}^{(d\!+\!2)}$ and $\sigma_{W,1}^{(d)}=\sigma_{W,1}^{(d\!+\!2)}$ for $d\in\Z_{\ge1}$. 

\smallskip
\noindent
{\it Note.}
Let $d$ be even. Then the reflections $\sigma_{W,0}^{(d)},\
\sigma_{W,1}^{(d)}$ are involutive:
\begin{equation}
\label{eq:25}
(\sigma_{W,0}^{(d)})^2=(\sigma_{W,1}^{(d)})^2 = \id_{H_W} . 
\end{equation}
In the next section, we shall see that the eigenvalues in a suitable sense of the  product
$\sigma_{W,0}^{(d)}\circ\sigma_{W,1}^{(d)}$ is "dense" in the unit
circle $S^1$ in $\C^\times$, and hence
$\sigma_{W,0}^{(d)}\circ\sigma_{W,1}^{(d)}$ is of infinite order. 
As a consequence, there is no more relations among
$\sigma_{W,0}^{(d)}$ and $\sigma_{W,1}^{(d)}$, and the monodromy group is isomorphic to $\Z/2\Z*\Z/2\Z$.

\medskip
\noindent
{\bf Definition.} 
In analogy with the classical simple singularities, let us call the
product of the two monodromy transformations $\sigma_{W,0}^{(d)}$ and
$\sigma_{W,1}^{(d)}$ a {\it Coxeter element}. Two Coxeter elements
depending on the order of the product are conjugate to each other. We fix one order as follows and call the product the Coxeter element.

\vspace{-0.2cm}
\begin{equation}
\begin{array}{rlll}
\label{eq:26}
Cox_W^{(d)}(u)\!& :=\ \sigma_{W,0}^{(d)}\circ\sigma_{W,1}^{(d)}\ (u) \\
&& \\
=\quad &\!\!\!\!\!\!\!\! \begin{cases}
(-1)^{d\!-\!1}\big(u+\sum_{c\in C_{W,1} }J_W(u,\gamma_{W,c})\gamma_{W,c}\\
-\sum_{c\in C_{W,1} }\sum_{d\in C_{W,0} }J_W(u,\gamma_{W,c})J_W(\gamma_{W,d},\gamma_{W,c})\gamma_{W,d}\big) & \text{if } u\!\in\! H_{W,0}\\
\quad\vspace{-0.4cm}\\
(-1)^{d\!-\!1}\big(u-\sum_{c\in C_{W,0} }J_W(\gamma_{W,c},u)\gamma_{W,c}\big)& \text{if } u\!\in\! H_{W,1} .
\end{cases}
\end{array}
\end{equation}
{\bf Observation.}
{\it The Coxeter element is, up to the sign factor $(-1)^{d\!-\!1}$, independent
of the suspensions for $d\in\Z_{\ge1}$ \eqref{eq:21}}.

\medskip
\noindent
{\bf Remark.}  It is wellknown that a classical Coxeter element for a
root system of finite type $W$ is semisimple of finite order, and
$\frac{1}{2\pi \sqrt{-1}}\log$ of its
eigenvalues, referred as {\it spectra} and given by $\frac{m_i}{h}$
{\small ($i\!=\!1,\!\cdots\!,n$)}, play important role in Lie theory
{\small (\cite{Bo})}. They appear also as exponents of the primitive
forms associated with simple polynomials of type $W$ {\small \cite{Sa1}} and the
fact they lie in the interval $(0,1)$ for the case $d=2$ characterize that
they are primitive forms associated with simple polynomials {\small \cite{Sa4}}. 

The Coxeter
elements of types $\AA$ and $\DD$ are no longer of finite order. However, 
in the next section, we show that they are diagnalizable in suitable
sense and   
the {\it spectra} for them are introduced. Then, the sign factor
$(-1)^{d\!-\!1}$ in \eqref{eq:26} of the Coxeter element $Cox_W^{(d)}$ is lifted to the
shift of the spectra by $\frac{d\!-\!1}{2}$ and
of the spectra so that the spectra of $Cox_W^{(d)}$ is contained in the
interval $(\frac{d}{2}-1,\frac{d}{2})$. The spectra should 
play a key role for primitive forms of type $\AA$
and $\DD$ in a forth coming paper, where the shift of the spectra corresponds to the $\frac{d\!-\!1}{2}$-shift of the primitive forms in the semi-infinite Hodge filtration.

\section{Spectra of Coxeter elements of types $\AA$ and $\DD$}
We study spectra of the Coxeter element $Cox_W^{(d)}$ for {\small $W\in\{\AA,\DD\}$}. For the purpose,
we extend the domain of the Coxeter element to the completion of
$H_{W,C}:=H_W\otimes_\Z\C$ with respect to the $l^2$-norm with  the
ortho-normal basis  $\{\gamma_{W,c}\}_{c\in\C_{W}}$. The Coxeter element
action on this space is diagonalizable (in a suitable sense), and its
``eigenvalues'' take values in the unit circle $S^1\subset \C^\times$. 
We want to determine $\frac{1}{2\pi\sqrt{-1}}\log$ of the ``eigenvalues'', called the {\it spectra} of the Coxeter
element. Actually, it  is calculated by a help of the intersection form
$\dot{I}_W$, since it, as a positive symmetric operator, has only
positive real eigenvalues. It turns out that we get a continuous spectra on the interval $(\frac{d}{2}-1,\frac{d}{2})$.  

\subsection{Hilbert space $\overline H_{W,\C}$}.

Consider $\C$-vector spaces obtained by the complexification of the $\Z$-lattices $H_{W,0},\ H_{W,1}$
and $H_W$ (recall \eqref{eq:14},\eqref{eq:15} and \eqref{eq:16}):
\begin{eqnarray}
\label{eq:27}
\quad H_{W,0,\C}\!:=\!\! H_{W,0}\!\!\otimes_\Z \!\C, H_{W,1,\C}\!:=\!\! H_{W,1}\!\!\otimes_\Z\! \C &\! \text{and}\! & H_{W,\C}\!:=\!\! H_{W}\!\!\otimes_\Z\!\C.\!\!\!\!\!\!\!\!\!\!\!\!\!\!\!\!\!\!\!\!\!
\end{eqnarray}
We equip them with a hermitian inner product $\langle\cdot,\cdot\rangle$ defined by 
\begin{equation}
\label{eq:28}
\langle \sum_{c\in C_W} a_c\gamma_{W,c}, \sum_{c\in C_W} b_c\gamma_{W,c}\rangle :=\sum_{c\in C_W} a_c \bar b_{c},
\end{equation}
where $a_c,b_c$ ($c\!\in\! C_W$) are complex numbers. Then, the
$l^2$-completions  of the spaces with respect  to this inner product
are separable Hilbert spaces, denoted by $\overline H_{W,0,\C}$, $\overline H_{W,1,\C}$ and $\overline H_{W,\C}$, respectively. We have the orthogonal direct sum decomposition: 
\begin{equation}
\label{eq:29}
\overline H_{W,\C}= \overline H_{W,0,\C} \oplus\overline H_{W,1,\C}.
\end{equation}
Let us denote by $\pi_0$ and $\pi_1$ the orthogonal projections of the space $\overline H_{W,\C}$ to the subspaces $\overline H_{W,0\C}$ and $\overline H_{W,1,\C}$, respectively, so that the sum 

\centerline{
$id_{\overline H_{W,\C}}\ = \ \pi_0\ +\ \pi_1$ 
}
\noindent
is the identity map on  $\overline H_{w,\C}$.

Remark that the lattice $H_W$\! is self-dual:\! $\Hom_\Z(H_W,\Z)\!\cap\! \overline H_{W,\C}\!=\! H_W$.

\medskip
\noindent
{\bf Convention.} In the sequel of the present paper, we freely
identify a continuous bilinear form $A$ on $\overline H_{W,\C}$ (resp.\
$H_{W,\C}$) and a
continuous endomorphism $\dot{A}$ on $\overline H_{W,\C}$ (resp.\
$H_{W,\C}$) by the
following relations: 
\begin{equation*}
A(\xi,\eta)=\langle \dot{A}(\xi),\eta\rangle  \text{\quad and\quad }
\sum_{c\in C_W}A(u,\gamma_{W,c})\gamma_{W,c}= \dot{A}(u).
\vspace{-0.2cm}
\end{equation*}
Transposes $^t\!A$ of $A$ and $^t(\dot{A})$ of $\dot{A}$ are defined by
the relations $^t\!A(\xi,\eta)\!=\!A(\eta,\xi)$ and $\langle
\dot{A}(u),v\rangle\!=\!\langle u,^t\!(\dot{A})(v)\rangle$, respectively. Then, $^t(\dot{A})=\dot{(^t\!A)}$ .

\subsection{Extendability of  $I_W^{(d)}$ and  $Cox_{W}^{(d)}$ on 
$\overline{H}_W$}.

In order to calculate the eigenvalues  of  the intersection forms $I_W^{(d)}$ and the Coxeter elements $Cox_W^{(d)}$, we use the identification mentioned at the end of \S3.1.
Before we do this, we need to check that they are continuously
extendable to the completion $\overline H_{W,\C}$. This is achieved by
using the extendabilities of the endomorphisms $\dot{J}_W,\
^t\!\dot{J}_W$ associated with the bilinear forms \eqref{eq:18} and
\eqref{eq:19}. Put

\begin{equation}
\begin{array}{llll}
\label{eq:30}
\dot{J}_W(u)& := &\sum_{c\in C_{W}}J(u,\gamma_{W,c})\gamma_{W,c} \\
              &\ =& \begin{cases}
              u+ \sum_{c\in C_{W,1} }J_W(u,\gamma_{W,c})\gamma_{W,c} & \text{ if  } u\in H_{W,0}\\
              u & \text{ if  } u\in H_{W,1}\\
              \end{cases}
              \end{array}
               \end{equation}
\begin{equation}
\begin{array}{llll}
\label{eq:31}
^t\!\dot{J}_W(u)& := &\sum_{c\in C_{W}} {}^t\!J(u,\gamma_{W,c})\gamma_{W,c} \\
              &\ =& \begin{cases}
              u& \text{ if  } u\in H_{W,0}\\
              u + \sum_{c\in C_{W,0} }J_W(\gamma_{W,c},u)\gamma_{W,c} & \text{ if  } u\in H_{W,1}\\
              \end{cases}
              \end{array}
              \end{equation}
which are endomorphisms on $H_{W,\C}$, since the quiver $\Gamma_W$ in \S2.2 is locally finite,
i.e.\ any vertex is connected with only finite number of other
vertexes. 
The inverse action of $\dot{J}_W$ (resp.\ $^t\!\dot{J}_W$) on $H_{W,\C}$
 can be obtained by just replacing ``+'' by ``$-$'' in RHS of
 \eqref{eq:30} (resp.\ \eqref{eq:31}). 
 
\medskip
\noindent
{\bf Assertion 1.} {\it The endomorphisms $\dot{J}_W$, $^t\!\dot{J}_W$
and their inverses $\dot{J}_W^{-1}$, $^t\!\dot{J}_W^{-1}$ acting on $H_{W,\C}$ are extendable to bounded endomorphisms on $\overline H_{W,\C}$. The extensions are transpose to each other.}
\begin{proof}  We show only the extendability of the domain of endomorphisms
 $\dot{J}_W$, $^t\!\dot{J}_W$ and their inverses $\dot{J}_W^{-1}$, $^t\!\dot{J}_W^{-1}$ from $H_{W,\C}$ to
 $\overline H_{W,\C}$, where the extensions are denoted by the
 same notation. Then the
 relations $^t(\dot{J}_W)=^t\!\!\dot{J}_W$,
 $\dot{J}_W\dot{J}_W^{-1}=\id_{H_{W}}$, \ldots, etc. are automatically preserved for
 the extensions. 

The quivers $\Gamma_{\AA}$
and $\Gamma_{\DD}$ show that any critical point $c\in
 C_{W,0}$ is adjacent to at most two bdd components. In view of
 \eqref{eq:30}, this implies the inequality $\parallel
 \!\dot{J}_W(u)\!-\!u\!\parallel\le 2\!\parallel\! u\! \parallel$. Hence
 $\dot{J}_W$ is extendable to a bounded endomorphims on $\overline
 H_{W,\C}$, denoted by the same $\dot{J}_W$. 

 We observe also that,  to any bdd component, at most 3 critical points in $C_{W,0}$ are adjacent (actually, 3 occurs only one bdd component for the critical point $c_{D,1}^{(1)}$ of type $\DD$).  In view of \eqref{eq:31}, we get an inequality $\parallel\ ^t\!\dot{J}_W(u)-u\parallel\le 3\parallel u\parallel$, implying again the extendability of $^t\!\dot{J}_W$ to a bounded endomorphism on $\overline H_{w,\C}$, denoted by the same $^t\!\dot{J}_W$.

 Similar arguments
 shows the extendability of the inverses.
\end{proof}
\noindent
An immediate  consequence of {\bf Assertion 1} is that {\it the endomorphism 
\[
\!\!\!\!{\rm (3.3.{22})}^{\bullet}\qquad\qquad\qquad
\dot{I}_W^{(d)}:= (-1)^{[\frac{d}{2}]}\dot{J}_W-(-1)^{[\frac{d-1}{2}]}\ ^t\!\dot{J}_W\qquad\qquad\qquad
\]
defined on $H_{W,\C}$ is extendable to a bounded endomorphism on 
$\overline{H}_{W,\C}$}. 

Another important consequence of {\bf Assertion 1} is the following.

\begin{corollary}  
 The Coxeter element $Cox_W^{(d)}$ ($d\in \Z_{\ge1}$) defined on $H_{w,\C}$ is
 extendable to an invertible bounded automorphism on $\overline{H}_{W,\C}$.
\end{corollary}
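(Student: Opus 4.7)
The plan is to recognize that both monodromies $\sigma_{W,0}^{(d)}$ and $\sigma_{W,1}^{(d)}$, initially defined on $H_{W,\C}$ by the case-wise formulas \eqref{eq:23} and \eqref{eq:24}, can be rewritten globally as polynomial expressions in the bounded operators $\dot{J}_W$, $^t\!\dot{J}_W$, $\pi_0$, $\pi_1$; then invertibility and boundedness of $Cox_W^{(d)}$ will be immediate from Assertion 1 and the fact that orthogonal projections have norm $1$.

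First, I rewrite the monodromies. Since $^t\!\dot{J}_W$ restricts to the identity on $H_{W,0,\C}$, the operator $(^t\!\dot{J}_W-\mathrm{id})\pi_1$ coincides on $H_{W,1}$ with the map $u_1\mapsto\sum_{c\in C_{W,0}}J_W(\gamma_{W,c},u_1)\gamma_{W,c}$ appearing in \eqref{eq:23}. Consequently, for any $u\in H_{W,\C}$,
\[
\sigma_{W,0}^{(d)} \;=\; (-1)^{d-1}\pi_0 \;+\; \pi_1 \;-\; \bigl(\,{}^t\!\dot{J}_W-\mathrm{id}\bigr)\pi_1.
\]
Similarly, since $\dot{J}_W$ restricts to the identity on $H_{W,1,\C}$, the sum $\sum_{c\in C_{W,1}}J_W(u,\gamma_{W,c})\gamma_{W,c}$ in \eqref{eq:24} equals $(\dot{J}_W-\mathrm{id})\pi_0$ applied to $u$, giving
\[
\sigma_{W,1}^{(d)} \;=\; \pi_0 \;+\; (-1)^{d-1}\pi_1 \;+\; (-1)^{d-1}\bigl(\dot{J}_W-\mathrm{id}\bigr)\pi_0.
\]

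Next, by Assertion 1, the operators $\dot{J}_W$ and $^t\!\dot{J}_W$ extend to bounded endomorphisms of $\overline H_{W,\C}$; the projections $\pi_0,\pi_1$ are of norm $1$ on $\overline H_{W,\C}$. Hence both extensions $\sigma_{W,0}^{(d)}$ and $\sigma_{W,1}^{(d)}$ are bounded, and so is their composition $Cox_W^{(d)}$.

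For invertibility I would exhibit explicit two-sided inverses using the inverse operators $\dot{J}_W^{-1}, {}^t\!\dot{J}_W^{-1}$ also provided by Assertion 1. Concretely, observe that $(^t\!\dot{J}_W-\mathrm{id})\pi_1$ has image contained in $\overline H_{W,0,\C}$, so its composition with $\pi_1$ on the right and left is nilpotent of order two; this makes $\sigma_{W,0}^{(d)}$ a "shear plus sign" operator, whose inverse is of the same shape with the shear term sign-reversed (and the involutive identity \eqref{eq:25} for even $d$ is a special case). The analogous statement holds for $\sigma_{W,1}^{(d)}$. Thus each $\sigma_{W,j}^{(d)}$ is a bounded automorphism of $\overline H_{W,\C}$, and $Cox_W^{(d)}=\sigma_{W,0}^{(d)}\sigma_{W,1}^{(d)}$ is an invertible bounded automorphism, with inverse $\bigl(\sigma_{W,1}^{(d)}\bigr)^{-1}\bigl(\sigma_{W,0}^{(d)}\bigr)^{-1}$.

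The only mild obstacle is checking the rewriting of the case-wise formulas \eqref{eq:23}, \eqref{eq:24} in the uniform operator form above; once that bookkeeping is done, everything follows formally from Assertion 1. No delicate estimates beyond those already established for $\dot{J}_W$ and $^t\!\dot{J}_W$ are needed.
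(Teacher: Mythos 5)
Your argument is correct, but it follows a genuinely different factorization from the paper's. The paper proves the single identity $Cox_W^{(d)} = (-1)^{d-1}\,({}^t\!\dot{J}_W)^{-1}\dot{J}_W$ (formula \eqref{eq:32}) by a direct computation from \eqref{eq:26}, \eqref{eq:30} and the explicit form of $({}^t\!\dot{J}_W)^{-1}$, and then reads off boundedness and invertibility at once from Assertion 1. You instead express each local monodromy separately as a ``sign plus shear'' operator in $\dot{J}_W$, ${}^t\!\dot{J}_W$, $\pi_0$, $\pi_1$; your two operator identities do check out against \eqref{eq:23}, \eqref{eq:24}, \eqref{eq:30}, \eqref{eq:31}, and the shear term $({}^t\!\dot{J}_W-\id)\pi_1$ (resp.\ $(\dot{J}_W-\id)\pi_0$) is indeed square-zero because its image lies in $\overline{H}_{W,0,\C}$ (resp.\ $\overline{H}_{W,1,\C}$), which it annihilates, so each factor is invertible with bounded inverse. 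Both routes rest entirely on Assertion 1 and neither needs new estimates. What the paper's route buys is the identity \eqref{eq:32} itself, which is reused essentially in the proof of Theorem 7 to conjugate the Coxeter element into triangular form against the spectral family of $\dot{I}_W$; what your route buys is the slightly stronger and independently useful fact that each of $\sigma_{W,0}^{(d)}$ and $\sigma_{W,1}^{(d)}$ individually extends to a bounded automorphism of $\overline{H}_{W,\C}$ (consistent with the involutivity \eqref{eq:25} for even $d$), which the paper leaves implicit.
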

\begin{proof}
 Let us, first, show a formula:
\begin{equation}
\label{eq:32}
Cox_W^{(d)} = (-1)^{d\!-\!1}  \  (^t\!\dot{J}_W)^{-1} \dot{J}_W,
\end{equation}
on $H_W$ by a direct calculation  using formulae \eqref{eq:26},
 \eqref{eq:30} and 
{\small
$$
(^t\!\dot{J}_W)^{-1}(u) = \begin{cases}
              u& \text{ if  } u\in H_{W,0}\\
              u - \sum_{c\in C_{W,0} }J_W(\gamma_{W,c},u)\gamma_{W,c} & \text{ if  } u\in H_{W,1}. \\
              \end{cases}
\leqno (4.2.30)^{-1}   
$$
}
Then, RHS of \eqref{eq:32} is extendable to a bounded operator on
 $\overline{H}_{W,\C}$.

Invertibility of $Cox_W^{(d)}$ follows from that of $\dot{J}_W$ and $^t\!\dot{J}_W$.
\end{proof}

\noindent
{\bf Remark.}  Let $\check{H}_{W,\C}\!:=\!\Hom_\C(H_{W,\C},\C)$ be the (formal) dual vector space
of $H_{W,\C}$. The contragradient actions on $\check{H}_{W,\C}$ of the endomorphisms  
$\dot{J}_W$, $^t\!\dot{J}_{W}$, $\dot{I}_W^{(d)}$,
$^t\!\dot{I}_W^{(d)}$, $Cox_W^{(d)}$ and $^t\!Cox_W^{(d)}$ on $H_{W,\C}$
shall be denoted, as usual, by the super script
``\ $^t$(-)\ '' such that ``\ $^t{}^t$(-)=(-)\ ''.

On the other hand, by regarding $\{\gamma_{W,c}\}_{c\in C_W}$ as the
self-dual basis, $\check{H}_{W,\C}$ is identified with the direct
product $\prod_{c\in C_W}\C \gamma_{W,c}$ so that we have natural inclusions of
$\C$-vector spaces: 
\[
 H_{W,\C}\subset \overline{H}_{W,\C}\subset \check{H}_{W,\C}.
\]
Then it is easy to verify that the extensions of $\dot{J}_W$, $^t\!\dot{J}_{W}$, $\dot{I}_W^{(d)}$,
$^t\!\dot{I}_W^{(d)}$, $Cox_W^{(d)}$ and $^t\!Cox_w^{(d)}$ to the spaces
$\overline{H}_{w,\C}$ and $\check{H}_{W,\C}$ are naturally compatible
with respect to the above inclusions. 
The
relationships between these extensions and the transpositions are given as follows:
\[
{}^t\! \dot{I}_W^{(d)}= (-1)^{d}\dot{I}_W^{(d)} \quad \text{ and }\quad 
(^t\!Cox_W^{(d)})^{-1}= \dot{J}_W\ Cox_W^{(d)}\ \dot{J}_W^{-1} 
. 
\]
However, the bilinear form $I_{W,C}$ itself is no longer extandable to
$\check{H}_{W,\C}$ and the endomorphism $\dot{I}_W$ on
$\check{H}_{W,\C}$ has non-trivial kernel.

\subsection{Spectral decomposition of $I_W^{(d)}$ for even $d$}.

Using the fact \eqref{eq:22}, the bilinear form $I_W^{(d)}$ is symmetric
for even $d$. Let us consider the operator for the cases $d\in\Z_{\ge1}$ with $d\equiv0\bmod 4$,\footnote
{ We choose $d\equiv0\bmod 4$ so that the form $I_W$  is positive
and symmetric, defining
a ``root lattice structure of infinite rank'' on $H_W$ (cf.\ Proof of Assertion 3.).
}
\begin{equation}
\label{eq:33}
\dot{I}_W\ :=\ \dot{I}_W^{(d)}\ =\ \dot{J}_W+\ {}^t\!\dot{J}_W .  
\end{equation}

We, first, determine the point spectrum of the symmetric operator $\dot{I}_W$ on $\overline H_{W,\C}$.
Let us consider following two eigenspaces for $\lambda\in \C$:
\begin{equation}
\label{eq:34}
\check{H}_{W,\lambda}:=
\{\xi\!\in\!\check{H}_{W,\C}\mid
\dot{I}_W(\xi)\!=\!\lambda\xi\} \ \ \text{and}\ \
\overline{H}_{W,\lambda}:=\check{H}_{W,\lambda}\cap \overline{H}_{W,\C}.
\end{equation}

\vspace{0.1cm}
\noindent
{\bf Assertion 2.} 
{\it 
\vspace{-0.1cm}
For each type $W\!\in\!\{\AA,\DD\}$ and all $\lambda\!\in\!\C$, we have 
\begin{equation}
\label{eq:35}
\vspace{-0.1cm}
\dim_\C\check{H}_{W,\lambda}=1 \text{\qquad and\qquad }
\dim_\C\overline{H}_{W,\lambda}=0,
\end{equation} 
\vspace{-0.1cm}
except for the case $W=\DD$ and $\lambda=2$, where we have 
\vspace{-0.1cm}
\begin{equation}
\label{eq:36}
\vspace{-0.2cm}
\dim_\C\check{H}_{\DD,2}=2 \quad\text{ and }\quad
\dim_\C\overline{H}_{\DD,2}=1,
\end{equation}
and $\overline{H}_{\DD,2}$ is spanned by the vector $\gamma_{D,0}^+ -\gamma_{D,0}^-$.
}
\begin{proof} This is shown by solving the equation
 $\dot{I}_W(\xi)\!=\!\lambda\xi$ for the coefficients of
 $\xi\!=\!\sum_{c\in C_W}a_c\gamma_{W,c}\in \check{H}_{W,\C}$ formally
 and inductively according to the following labeling and ordering of coefficients:
\[
\! {\large \Gamma_{\AA}:}\qquad \ a_0 \longrightarrow a_1 \longleftarrow 
a_2\! \longrightarrow a_3\! \longleftarrow a_4\! \longrightarrow a_5 \longleftarrow \quad \cdot\ \cdot\ \cdot \qquad
\vspace{-0.4cm}
\]
\[
\begin{array}{llll}
\vspace{-0.2cm}
&\ \ \ b_{0}^{+}&\\
\vspace{-0.3cm}
&&\!\!\nwarrow\\ 
\vspace{-0.3cm}
\!\!\!\!\!\!\!\!\!\!\!\!\!{\large \Gamma_{\DD}:\ }\ & &\quad b_{1} \longrightarrow b_2 \longleftarrow 
b_3 \longrightarrow b_4 \longleftarrow b_5 \longrightarrow \ \ \ \cdot\
\cdot\ \cdot \! 
\vspace{-0.1cm}\\
\vspace{-0.2cm}
&&\!\! \swarrow\\
\vspace{-0.2cm}
&\ \ \ b_0^{-}
\end{array}
\vspace{0.1cm}
\]
Details of the calculation are omitted.  
Results are summarized as:

\medskip
\noindent
$\AA$: The space $\check{H}_{\AA,\lambda}$ for any $\lambda\!\in\!\C$ is spanned by  
{\small 
\vspace{-0.2cm} 
\[
\check{\xi}_{\AA,\lambda}: \quad a_n=
\frac{\exp((n+1)\sqrt{-1}\pi\theta)-\exp(-(n+1)\sqrt{-1}\pi\theta)}{\exp(\sqrt{-1}\pi\theta)-\exp(-\sqrt{-1}\pi\theta)}
\quad (n\ge0) 
\vspace{-0.1cm} 
\]
}
\noindent
where  $\theta$ is any complex number satisfying
$
 \lambda\! =\! 4 \sin^2(\frac{\pi}{2}\theta).
$
In case $\lambda\!=\!0$ or $4$ (i.e.\ when $\theta\in \Z$), we
 interpret this formula as $a_n=\pm(n+1)$.

\smallskip
\noindent
$\DD$: For all $\lambda\in\C$, let us introduce a vector
{\small
\vspace{-0.05cm} 
\[
\check{\xi}_{\DD,\lambda}: \quad b_0^+=1,\ b_0^-=1,\
 b_n=\exp(n\sqrt{-1}\theta)+\exp(-n\sqrt{-1}\theta) 
      \quad (n\ge1) \!\!\!\!
\vspace{-0.1cm} 
\]
}
where  $\theta$ is any complex number satisfying the equation
$
 \lambda = 4 \sin^2(\frac{\pi}{2}\theta).
$
\vspace{-0.1cm}
Then, the space $\check{H}_{\DD,\lambda}$ for any $\lambda\!\not=\!2$ is spanned 
 by {\small $\check{\xi}_{\DD,\lambda}$}. The space $\check{H}_{\DD,2}$
 is spanned by {\small $\check{\xi}_{\DD,2}$} and 
\[
 \gamma_{D,0}^+-\gamma_{D,0}^-:  \quad b_0^+=1,\ b_0^-=-1,\
 b_n=0  \ \ (n\ge1).
\]

The norm  $\langle\check{\xi}_{W,\lambda},\check{\xi}_{W,\lambda}\rangle$
 \eqref{eq:28} for any $W\!\in\!\{\AA,\DD\}$ and any
 $\lambda\!\in\! \C$ is unbounded, whereas
 $\gamma_{D,0}^+-\gamma_{D,0}^-$ has the bounded  norm=2.
\end{proof}

\noindent
\vspace{-0.15cm} 
{\bf Corollary.}\  {\it The point spectrum of  $\dot{I}_\AA$
on $\overline H_{W,\C}$ is empty, 
and that of $\dot{I}_\DD$ consists of a single eigenvalue
\vspace{-0.15cm}
$\lambda\!=\!2$ with multiplicity $1$. 
In particular, the operator $\dot{I}_W$ for any {\small $W\!\in\!\{\AA,\DD\}$} is 
non-degenerate on $\overline H_{W,\C}$ in the sense that
$\ker(\dot{I}_W)\!\cap\!\overline H_{W,\C}\!=\!\{0\}$.
}

\bigskip
\noindent
{\bf Remark.} By introducing the double cover  of the
$\lambda$-plane by $\mu\!:=\!\exp(\pi\sqrt{-1}\theta)\in
\C\setminus\{0\}$ with the relation 
$2\!-\!\lambda\!=\!\mu\!+\!\mu^{-1}$, the base
$\check{\xi}_{W,\lambda}$ in the proof of {\bf Assertion 2} can be expressed in terms of Laurent polynomials in
$\mu$. The reader may be puzzled by the use of $\theta$ instead of $\mu$
in the above proof.
We used the parameter $\theta$
since it shall parametrize the spectra of Coxeter
elements in the next paragraph. We remark also that
$\lambda\in[0,4]\ \Leftrightarrow \theta\in\R$.

\medskip
For a symmetric operator $\dot{I}_W$  on $\overline H_{W,\C}$, the
{\it greatest lower bound} and the {\it least upper bound}  are defined
as the maximal real number $m$ and the minimal real number $M$
satisfying the following inequalities, respectively (see \cite[\S104]{R-N}).
\begin{equation}
\label{eq:37}
m \langle \xi,\xi \rangle \ \le\ \langle \dot{I}_W(\xi),\xi \rangle=  I_W(\xi,\xi)\ \le \ M\langle \xi,\xi \rangle  \quad \forall \xi\in \overline H_{W,\C}
\end{equation}

\medskip
\noindent
{\bf Assertion 3.} {\it The greatest lower bound $m$ and the least upper
bound $M$ of   $\dot{ I}_W$  for both $W\in\{\AA,\DD\}$ is given 
by $m=0$ and $M=4$.}
\begin{proof} 
For the definition of $m$ and $M$, it is sufficient to run $\xi$ only in
 $H_W$ in the defining relation \eqref{eq:37}, since $H_{W,\C}$ is dense
 in $\overline H_{w,\C}$.  Any $\xi\in H_W$ is contained in a sublattice
 $L$ of $H_W$ generated by the vertices of a finite (connected)
 subdiagram $\Gamma$ of $\Gamma_W$ (recall \S2.2). Actually, $\Gamma$ is
 a diagram of type either $A_l$ or $D_l$ for some $l\in \Z_{>0}$ and $
 I_W|_L$  gives a root lattice structure of that type on $L$. That is,
 $\{I_W(\gamma_{W,c},\gamma_{w,d})\}_{c,d\in \Gamma\subset C_W}$ is the
 Cartan matrix of type $\Gamma$. In particular, the eigenvalues of $
 \dot{I}_W|_L$ is given by
 $4\sin^2\!\big(\frac{\pi}{2}\frac{m_i}{h}\big)$ ($i\!=\!1,\!\cdots\!,
 l\!=\!\rank(L)$), where $m_i$ are the exponents and $h$ is the Coxeter
 number of the root system of type $\Gamma$ {\small (\cite[ch.V,\S6,n$^o$2]{Bo})}. Since the smallest and the largest exponent of the (finite) root system are $1$ and $h\!-\!1$, respectively, the minimal and the maximal of the eigenvalues are $4\sin^2\!\big(\frac{\pi}{2}\frac{1}{h}\big)$ and $4\cos^2\!\big(\frac{\pi}{2}\frac{1}{h}\big)$, respectively. Since $h\to\infty$ according as $\Gamma$ "exhaust" $\Gamma_w$, we obtain
\begin{eqnarray*}
m &=& \inf_{\Gamma\subset \Gamma_W}  4\sin^2\Big(\frac{\pi}{2}\frac{1}{h}\Big)  =\lim_{h\to\infty}  4\sin^2\Big(\frac{\pi}{2}\frac{1}{h}\Big)  =0.\\
\vspace{-0.5cm}
M &=& \sup_{\Gamma\subset \Gamma_W}  4\cos^2\Big(\frac{\pi}{2}\frac{1}{h}\Big)  = \lim_{h\to\infty}  4\cos^2\Big(\frac{\pi}{2}\frac{1}{h}\Big)  =4.
\vspace{-0.5cm}
\end{eqnarray*}

\vspace{-0.5cm}
\end{proof}

We apply the spectral decomposition theory of bounded symmetric
operators (see \cite[\S107 Theorem]{R-N}) to the operator $\dot{I}_W$.
Let us reformulate the result in [ibid] by adjusting the notation to our setting.

\medskip
\noindent
{\bf Theorem 6.}\ {\it For each type 
{\small $W\in\{\AA,\DD\}$}, there exists a unique
spectral family $\{E_{W,\lambda}\}_{\lambda\in \R}$ (i.e.\ a family of projection operators\footnote
{Here, we mean by a {\it projection operator} an orthogonal projection map
from $\overline H_{W,\C}$ to its closed
subspace such that the real form  $\overline H_{W,\R}$ is mapped into
itself. The fact that $E_{W,\lambda}$ is real, is not explicitly stated
in the literature \cite{R-N}, but follows trivially  from its  construction and from the fact that $\dot{I}_w$ is real.
}
on $\overline H_{W,\C}$ satisfying the following} 
\text{a)}, \text{b)}, \text{c)}: 

\smallskip
\text{a)} {\it For $\lambda\le \mu$, one has $E_{W,\lambda}\le E_{W,\mu}$ ($\underset{\tiny{ def}}{\Leftrightarrow} E_{W,\lambda} E_{W,\mu}=E_{W,\lambda}$).
}

\text{b)} {\it The family is strongly continuous with respect to $\lambda$, i.e.\

\smallskip
\centerline
{
$E_{W,\lambda+0}(:=\underset{\mu\downarrow
0}{\lim}E_{w,\lambda+\mu})=E_{W,\lambda-0}(:=\underset{\mu\uparrow
0}{\lim}E_{w,\lambda-\mu})$,
}
for all $\lambda$ except at $\lambda=2$ for type {\small $W=\DD$}. We have
\begin{equation}
\label{eq:38}
  E_{\DD,2+0}-E_{\DD,2-0}=\eta_{D}
\end{equation}
where $\eta_{D}$ is the projection: 
{\small $\overline H_{\DD,\C}\to\overline{H}_{\DD,2}=\C(\gamma^+_{D,0}-\gamma^-_{D,0})$.}
}

\text{c)} {\it One has $E_{W,\lambda}=0$ for $\lambda\le0$ and
$E_{w,\lambda}=\Id_{\overline{H}_{w,\C}}$ for $\lambda\ge 4$.

\smallskip
\noindent
so that following \eqref{eq:39} holds.
\vspace{-0.1cm}
\begin{equation}
\label{eq:39}
\qquad  (\dot{I}_W)^r=\int_{0}^4\lambda^r dE_{W,\lambda} \qquad (\text{for \ \ } r=0,1,2,\cdots).\quad 
\end{equation}
 where the integral is in the sense of Lebesgue-Stieltjes.  
}\footnote{
Furthermore, \cite[\S107 Theorem]{R-N}, for any complex valued
continuous function $u(\lambda)$ on the interval $[0,4]$, we have an
equality 
$u(\dot{I}_W)= \int_{0}^4 u(\lambda) dE_{W,\lambda}$
between bounded operators, where LHS is defined by a (monotone decreasing) polynomial approximation of $u$ and RHS is given by the norm-limit of the Stieltjes type summation.  Then, for any $\xi,\eta\in\overline H_{W,\C}$, we have
$ \langle u(\dot{I}_W)\xi,\eta\rangle = \int_{0}^4 u(\lambda) d\langle E_{W,\lambda}\xi,\eta\rangle.
$
}

\subsection{Spectra of Coxeter elements}.

Recall that $\lambda\in[0,4]$ in \S4.3 Theorem 6 is the parameter for the
spectra of the intersection form $I_W:=I_W^{(d)}$ for
$d\!\equiv\!0\bmod 4$. What is wonderful, is the fact that this
parameter gives a clue to parametrize the spectra $\theta$ of the Coxeter
elements $Cox_W^{(d)}$ for all  $d\in \Z_{\ge1}$. In order to achieve
this, we re-parametrize 
$\lambda$ by a new parameter $\theta$ and by the relation (which we once used in a proof of {\bf
Assertion 2}.)
\begin{equation}
\label{eq:40}
\qquad \lambda \ = \ 4 \sin^2\Big(\theta\frac{\pi}{2}\Big) 
\quad \ \text{for }\ 0\le\theta\le 1.\!\!\!\!\!\!
\end{equation}
%
Let us introduce a Stieltjes measure on the interval $\theta\in[0,1]$:
\begin{equation}
\label{eq:41}
\xi_{W,\theta}\ :=\  U_\theta \!\cdot\! dE_{W,\lambda} \!\cdot\! U_\theta^{-1}
\end{equation}
for each {\small $W\!\in\!\!\{\AA,\DD\}$}, where 

\noindent
{\rm (i)} $\{E_{W,\lambda}\}_{\lambda\in[0,4]}$ is the spectral
family  in \S{\rm 4.3} {\bf Theorem 6},  

\noindent
(ii) $U_\theta$ $(0\!\le\!\theta\!\le\!1)$ is a family of unitary
operators on 
$\overline{H}_{W,\C}$ given by
\begin{equation}
\label{eq:42}
U_\theta:=\exp\Big(\!\!-\frac{\pi}{2}\sqrt{-1}\theta\Big)\pi_0-\exp\Big(\frac{\pi}{2}\sqrt{-1}\theta\Big)\pi_1,
\end{equation}
where $\pi_i$ $(i\!=\!0,1)$ is the orthogonal projections to
the subspace $\overline H_{W,\C,i}$.

\medskip

\bigskip
\noindent
{\bf Theorem 7.}\
{\it We have the following} \text{a)} {\it and} \text{b)}.

\smallskip
\text{a)} {\it The $\xi_{W,\theta}$ is a Stieltjes measure on $\theta\in[0,1]$,
which is strongly continuous except at $\theta=\frac{1}{2}$ for the type
$\DD$. We have}
\begin{equation}
\label{eq:43}
\xi_{\DD,\frac{1}{2}+0}-\xi_{\DD,\frac{1}{2}-0}=\eta_{D},
\end{equation}
where we recall \eqref{eq:38} for $\eta_D$.

\text{b)} {\it The following two formulae hold:
\begin{equation}
\label{eq:44}
Cox_W^{(d)}\!\cdot\! \xi_{W,\theta} = \exp\Big(2\pi\sqrt{-1}\Big(\theta+\frac{d-2}{2}\Big)\Big)\xi_{W,\theta},
\end{equation}
\vspace{-0.2cm}
and
\vspace{-0.2cm}
\begin{equation}
\label{eq:45}
\int_{\theta=0}^{\theta=1} \xi_{W,\theta} \ =\
\frac{1}{2}\ \dot{I}_W.
\end{equation}
}
\begin{proof} 
a) The first half of the statement is obvious. 
Since
 $\overline{H}_{\DD,2}$ is a subspace of $\overline{H}_{\DD,0}$,
 we have $\pi_0\eta_D\!=\!\eta_D\pi_0\!=\!\eta_D$ and
 $\pi_1\eta_D\!=\!\eta_D\pi_1\!=\!0$. Then, LHS of \eqref{eq:43} is
 given by
{\footnotesize
 $U_{\frac{1}{2}}dE_{\DD,2+0}U_{\frac{1}{2}}^{-1}-U_{\frac{1}{2}}dE_{\DD,2-0}U_{\frac{1}{2}}^{-1}$}

\noindent
{\footnotesize 
$=U_{\frac{1}{2}}(dE_{\DD,2+0}-dE_{\DD,2-0})U_{\frac{1}{2}}^{-1}=U_{\frac{1}{2}}\eta_DU_{\frac{1}{2}}^{-1}=\eta_D$
 \ 
 (c.f.\ \eqref{eq:38}).}

b) 1. Proof of \eqref{eq:44}. 

Consider the infinitesimal form of the formula \eqref{eq:39} for $r\!=\!1$:
\begin{equation}
\label{eq:46}
\dot{I}_W\!\cdot\!  dE_{W,\lambda} \ = \ \lambda dE_{W,\lambda}.
\end{equation}
Substitute the decomposition $dE_{W,\lambda}=\pi_0\!\cdot\! dE_{W,\lambda}+\pi_1\!\cdot\! dE_{W,\lambda}$ in this formula. Then, 
using \eqref{eq:33} ,  the LHS is equal to
\[
\begin{array}{lll}
\dot{I}_W\!\cdot\!  dE_{W,\lambda} &=& (\dot{J}_W+ ^t\!\dot{J}_W)(\pi_0\!\cdot\! dE_{W,\lambda}+\pi_1\!\cdot\! dE_{W,\lambda})\\
&=& 2\ \pi_0\!\cdot\! dE_{W,\lambda}+2\ \pi_1\!\cdot\! dE_{W,\lambda}\\
&&\!\!\! +\ (\dot{J}_W-id)(\pi_0\!\cdot\! dE_{W,\lambda})\ +\ (\dot{J}_W-id)(\pi_1\!\cdot\! dE_{W,\lambda})\\
&&\!\!\!+\ (^t\!\dot{J}_w-id)(\pi_0\!\cdot\! dE_{w,\lambda})\ +\ (^t\!\dot{J}_W-id)(\pi_1\!\cdot\! dE_{w,\lambda}).
\end{array}
\]
On the other hand, recalling \eqref{eq:30} and \eqref{eq:31}, we know that  
\[
\begin{array}{ccc}
(\dot{J}_W\!-\!id)(\pi_1\!\cdot\! dE_{W,\lambda})\!=\!0,\! &(\dot{J}_W\!-\!id)(\pi_0\!\cdot\! dE_{W,\lambda})\in \mathrm{Hom}(\overline{H}_{W,\C},\overline{H}_{W,\C,1}), \\ 
\!(^t\!\dot{J}_w\!-\!id)(\pi_0\!\cdot\! dE_{W,\lambda})\!=\!0,\! &
(^t\!\dot{J}_W\!-\!id)(\pi_1\!\cdot\! dE_{W,\lambda})\in \mathrm{Hom}(\overline{H}_{W,\C},\overline{H}_{W,\C,0}).
\end{array}
\]
Equating this with  $\lambda
 dE_{W,\lambda}\!=\!\lambda\pi_0\!\cdot\! dE_{w,\lambda}\!+\!\lambda\pi_1\!\cdot\! dE_{w,\lambda}$ \eqref{eq:45},
we obtain 
\[
({}^t\!\dot{J}_W\!-\!id)(\pi_1\!\cdot\! dE_{W,\lambda}) \!=\! (\lambda\!-\!2)\pi_0\!\cdot\! dE_{W,\lambda},\ 
(\dot{J}_W\!-\!id)(\pi_0\!\cdot\! dE_{W,\lambda}) \!=\! (\lambda\!-\!2)\pi_1\!\cdot\! dE_{W,\lambda}.
\]
Rewriting these together in matrix expressions, we obtain
\begin{equation}
\label{eq:47}
\dot{J}_W
\left(\!\!
\begin{array}{c}
\pi_0\!\cdot\! dE_{W,\lambda}\\
\pi_1\!\cdot\! dE_{W,\lambda}\\
\end{array}
\!\!\right)
=
\left(\!\!
\begin{array}{cc}
1 & \lambda-2 \\
0 & 1\\
\end{array}
\!\!\right)
\left(\!\!
\begin{array}{c}
\pi_0\!\cdot\! dE_{W,\lambda}\\
\pi_1\!\cdot\! dE_{W,\lambda}\\
\end{array}
\!\!\right).
\end{equation}
\begin{equation}
\label{eq:48}
^t\!\dot{J}_W
\left(\!\!
\begin{array}{c}
\pi_0\!\cdot\! dE_{W,\lambda}\\
\pi_1\!\cdot\! dE_{W,\lambda}\\
\end{array}
\!\!\right)
=
\left(\!\!
\begin{array}{cc}
1 & 0 \\
\lambda-2 & 1\\
\end{array}
\!\!\right)
\left(\!\!
\begin{array}{c}
\pi_0\!\cdot\! dE_{W,\lambda}\\
\pi_1\!\cdot\! dE_{W,\lambda}\\
\end{array}
\!\!\right).
\end{equation}
and, hence, also
\[(^t\!\dot{J}_W)^{-1}
\left(\!\!
\begin{array}{c}
\pi_0\!\cdot\! dE_{W,\lambda}\\
\pi_1\!\cdot\! dE_{W,\lambda}\\
\end{array}
\!\!\right)
=
\left(\!\!
\begin{array}{cc}
1 & 0 \\
2-\lambda & 1\\
\end{array}
\!\!\right)
\left(\!\!
\begin{array}{c}
\pi_0\!\cdot\! dE_{W,\lambda}\\
\pi_1\!\cdot\! dE_{W,\lambda}\\
\end{array}
\!\!\right).
\]
Thus, combining these with the expression \eqref{eq:32}, we obtain
\begin{equation}
\label{eq:49}
Cox_W^{(d)}\!\!
\left(\!\!
\begin{array}{c}
\!\pi_0\!\cdot\! dE_{W,\lambda}\!\\
\!\pi_1\!\cdot\! dE_{W,\lambda}\!\\
\end{array}
\!\!\right)
\!=\!
(-1)^{d\!-\!1}\!\!\left(\!\!
\begin{array}{cc}
\!1\!&\! \lambda-2\!\!\\
\!\!2\!-\!\lambda \!&\! 1\!-\!(\lambda\!-\!2)^2\!\!\\
\end{array}
\!\!\right)\!\!
\left(\!\!
\begin{array}{c}
\pi_0\!\cdot\! dE_{W,\lambda}\\
\pi_1\!\cdot\! dE_{W,\lambda}\\
\end{array}
\!\!\right).\!\!
\end{equation}
Substitute $\lambda$ in the RHS matrix by the expression \eqref{eq:40} :
{\footnotesize
\[\begin{array}{llll}
(-1)^{d\!-\!1} \left(\!\!\!
 \begin{array}{cc}
 1& \lambda-2\\
 2-\lambda& 1-(\lambda-2)^2\\
 \end{array}
\!\!\!
 \right) 
 = 
(-1)^{d\!-\!1}\left(\!\!\!
\begin{array}{cc}
1& -2\cos(\pi\theta)\\
2\cos(\pi\theta) & \sin^2(\pi\theta)-3\cos^2(\pi\theta)
\end{array}
\!\!\!\right).
\end{array}\]
}
\noindent
We see that the matrix is semi-simple for any $\theta$.  The eigenvalues  are
\vspace{-0.2cm}
\[
 \exp\!\Big(\!\pm2\pi\sqrt{-1}\Big(\ \theta+\frac{d-2}{2}\Big)\Big),
\vspace{-0.2cm}
\]
and associated row eigenvectors (independent of $n$) are  
\begin{equation*}
\vspace{-0.2cm}
\left(\exp\Big(\mp\frac{\pi}{2}\sqrt{-1}\theta\Big), -\exp\Big(\pm\frac{\pi}{2}\sqrt{-1}\theta\Big)\right).
\end{equation*}
Therefore, by introducing the unitary operators
\begin{equation}
\label{eq:50}
U_{\pm\theta}:=
\exp\Big(\mp\frac{\pi}{2}\sqrt{-1}\theta\Big)\pi_0-\exp\Big(\pm\frac{\pi}{2}\sqrt{-1}\theta\Big)\pi_1
\end{equation}
satisfying relations: $^t\!U_{\pm\theta}=U_{\pm\theta}=\overline{U_{\mp\theta}}$
and $U_{\pm\theta}\!\cdot\!
 U_{\mp\theta}=\id_{\overline{H}_{W,\C}}$, we
 introduce a Stieltjes measure on $[0,4]:=\{\lambda\!\in\!\R\mid
 0\!\le\!\lambda\!\le\!4\}\simeq [0,1]:=\{\theta\!\in\!\R\mid 0\!\le\!\theta\!\le\!1\}$:
\begin{equation}
\label{eq:51}
\xi_{W,\theta}^{\pm} := U_{\pm\theta} \!\cdot\! dE_{W,\lambda}\!\cdot\! U_{\mp\theta}.
\end{equation}
Then, from \eqref{eq:49}, we obtain
\begin{equation}
\label{eq:52}
Cox_W^{(d)}\!\cdot\! \xi_{W,\theta}^{\pm} =
\exp\Big(\pm2\pi\sqrt{-1}\Big(\theta+\frac{d-2}{2}\Big)\Big)\xi_{W,\theta}^{\pm}.
\end{equation}
Putting $\xi_{W,\theta}:=\xi_{W,\theta}^+$, we obtain
\eqref{eq:43}.

\smallskip
b) 2. Proof of \eqref{eq:45}.

Using \eqref{eq:41} and \eqref{eq:42}, we decompose $\xi_{W,\theta}$ into 4 pieces:

\smallskip
\noindent
{\small 
\[
 \pi_0\!\cdot\! dE_{W,\theta}\!\cdot\! \pi_0 \!+\! 
\pi_1\!\cdot\! dE_{W,\theta}\!\cdot\! \pi_1 \!-\! 
\exp(\pi\sqrt{\!-\!1}\theta)\pi_1\!\cdot\! dE_{W,\theta}\cdot \pi_0 \!-\! 
 \exp(\!-\!\pi\sqrt{\!-\!1}\theta)\pi_0\!\cdot\! dE_{W,\theta}\!\cdot\!
 \pi_1.
\]
}
\noindent
The first two terms are integrated easily by
{\small
\begin{eqnarray*}
\int_{\theta=0}^{\theta=1}
\pi_0\!\cdot\! dE_{W,\theta}\!\cdot\! \pi_0 =
\pi_0\!\cdot\!\left(\int_{\theta=0}^{\theta=1}
 dE_{W,\theta}\right)\!\cdot\! \pi_0 =
\pi_0\cdot\id_{\overline{H}_{W,\C}}\cdot\pi_0=\pi_0,\\
\int_{\theta=0}^{\theta=1}
\pi_1\!\cdot\! dE_{W,\theta}\!\cdot\! \pi_1 =
\pi_1\!\cdot\!\left(\int_{\theta=0}^{\theta=1}
 dE_{W,\theta}\right)\!\cdot\! \pi_1 =
\pi_1\cdot\id_{\overline{H}_{W,\C}}\cdot\pi_1=\pi_1.
\end{eqnarray*}
}
The third and fourth terms are integrated by the use of Footnote 8. 

First, we introduce bounded nilpotent operators
 $\dot{K}_W\!:\!\overline{H}_{W,0,\C}\!\to\! \overline{H}_{W,1,\C}$ and 
 $^t\!\dot{K}_W\!:\!\overline{H}_{W,1,\C}\!\to\! \overline{H}_{W,0,\C}$, by 
$\dot{K}_W\!:=\!\id_{\overline{H}_{W,\C}}\!-\!\dot{J}_W$ and 
$^t\!\dot{K}_W\!:=\!\id_{\overline{H}_{W,\C}}\!-\!^t\!\dot{J}_W$ so that we have
$\dot{K}_W^2\!= ^t\!\!\dot{K}_W^2\!=\!0$
 and  $\dot{I}_{W}\!=\!2\ \id_{\overline{H}_{W,\C}}\!\!-\!\dot{K}_W\!-^t\!\!\dot{K}_W$.
 Then,\footnote
{In the present paper, $\sqrt{X}$ takes the positive branch for a
 positive object $X$.}
\[\begin{array}{llll}
& \int_{\theta=0}^{\theta=1}
\exp(\pi\sqrt{\!-\!1}\theta)\pi_1\!\cdot\! dE_{W,\theta}\cdot \pi_0 \\
 =&
  \pi_1\left[\int_{\theta=0}^{\theta=1}\left(1\!-\!2\sin^2(\frac{\pi}{2}\theta)\!+\!\sqrt{-1}
\ 2\sqrt{1\!-\!\sin^2(\frac{\pi}{2}\theta)}\sin(\frac{\pi}{2}\theta)\right) dE_{W,\lambda}\right]\pi_0
\\
 =&
  \pi_1\left[\int_{\theta=0}^{\theta=1}\left(1-\frac{\lambda}{2}+\frac{\sqrt{-1}}{2}\sqrt{(4-\lambda)\lambda}\right) dE_{W,\lambda}\right]\pi_0
\\
 =&
  \pi_1\left[\id_{\overline{H}_{w,\C}}-\frac{\dot{I}_W}{2}+\frac{\sqrt{-1}}{2}\sqrt{(4\
	\id_{\overline{H}_{w,\C}}-\dot{I}_W)\dot{I}_W} \right]\pi_0
\\
\end{array}
\]
After sandwiching by $\pi_1$ and $\pi_0$, the first and the second
 terms turn out to be $\pi_1\cdot\id_{\overline{H}}\cdot\pi_0=0$ and
 $\pi_1\cdot\frac{\dot{I}_W}{2}\cdot\pi_0=-\frac{\dot{K}_W}{2}$, respectively.
The third term turns out to be zero, since the operator 
{\small
\[ \begin{array}{lll}
\sqrt{(4\ \id_{\overline{H}_{W,\C}}\!-\!\dot{I}_W)\dot{I}_W}
 \!&=\sqrt{(2\ \id_{\overline{H}_{W,\C}}\!\!+\!\dot{K}_W\!+\!\ ^t\!\dot{K}_W)(2\
  \id_{\overline{H}_{W,\C}}\!\!-\!\dot{K}_W\!-\! ^t\!\dot{K}_W)} \\
&= \sqrt{4\ \id_{\overline{H}_{W,\C}}-\dot{K}_W\cdot\ ^t\!\dot{K}_W-\ ^t\!\dot{K}_W\cdot \dot{K}_W} 
\end{array}
\]
}
preserves the decomposition \eqref{eq:29} so that it does not have the
 ``cross'' term sandwiched by $\pi_1$ and $\pi_0$. 
Thus, we get 
{\small
\[
\int_{\theta=0}^{\theta=1}
\exp(\pi\sqrt{\!-\!1}\theta)\pi_1\!\cdot\! dE_{w,\lambda}\cdot \pi_0 
=\frac{\dot{K}_W}{2}. 
\]
}
\vspace{-0.2cm}
Similarly, we obtain also
{\small
\[
 \int_{\theta=0}^{\theta=1}
\!\!\exp(-\!\pi\sqrt{\!-\!1}\theta)\pi_0\!\cdot\! dE_{W,\lambda}\cdot \pi_1 
=\frac{^t\!\dot{K}_W}{2}. 
\]
}
These altogether show the formula \eqref{eq:45}
\end{proof}

\noindent
{\bf Corollary.} 
{\it Let
$\varphi(\theta)=\sum_{m\in\Z}a_m\exp(2\pi\sqrt{-1}m(\theta+\frac{d-2}{2}))$
be an absolutely convergent Fourier expansion of a complex valued continuous function on
the interval $\theta\in [0,1]$. Then, we have
{\small
\begin{equation}
\label{eq:53}
2 \int_{\theta=0}^{\theta=1}\varphi(\theta)\cdot\xi_{W,\theta} =
\sum_{m\in\Z}a_m (Cox_W^{(d)})^m \cdot \dot{I}_W.
\end{equation}
}
}

\medskip
\noindent
{\bf Remark.}  Due to the integral formula \eqref{eq:45}, we get a factor $\dot{I}_W$ at the end of the
formula \eqref{eq:53}. Since $\dot{I}_W$ is not invertible by a bounded
operator (recall the comment at the end of \S4.2), the meaning of this factor is unclear. It would be desirable
to ask: 

\smallskip
\noindent
 {\bf Question:} Can $\xi_{W,\theta}$ be divisible by $\dot{I}_W$ from
 the right (c.f.\ \S4.3 Corollary to {\bf Assertion 2})?

\medskip
\bigskip
\noindent
{\it Acknowledgements.} The author is grateful to Hikosaburo Komatsu and
Serge Richard for discussions to clarify the spectral analysis in \S4.

\end{document}